\documentclass[12pt]{amsart}
\usepackage{amsfonts}
\usepackage{amsmath,bm,amsthm,amssymb}
\usepackage{bbm}
\usepackage{bigints}
\usepackage{caption}
\usepackage{color}
\usepackage{enumerate}
\usepackage{enumitem}
\usepackage[T1]{fontenc}
\usepackage{fullpage}
\usepackage[left=2cm,right=2cm,top=2.5cm,bottom=2.5cm]{geometry}
\usepackage{graphicx}
\usepackage{hyperref}
\usepackage{mathrsfs}
\usepackage{multirow}
\usepackage[square,sort,comma,numbers]{natbib}
\usepackage{natbib}
\usepackage{pgfplots}
\usepackage{setspace}
\usepackage{siunitx}
\usepackage{subcaption}
\usepackage{tikz}
\usepackage{times}

\usetikzlibrary{patterns,calc}
\pgfplotsset{compat=1.18}

 % indicator function
 % indicator
\newcommand{\EXCLUDE}[1]{}

\newcommand{\remove}[1]{}
%%%%%%%%%%%%%%%newcommand{\baselinestretch}{1.25}
\def\P{\mathbb{P}}
\def\E{\mathbb{E}}
\newcommand{\beqn}{\begin{equation}}
	\newcommand{\eeqn}{\end{equation}}
\newcommand{\beq}{\begin{eqnarray}}
	\newcommand{\eeq}{\end{eqnarray}}
\newcommand{\beqq}{\begin{eqnarray*}}
	\newcommand{\eeqq}{\end{eqnarray*}}%\pagestyle{plain}
\def\:{:\,}

\newtheorem{theorem}{Theorem}[section]

\newtheorem{lemma}[theorem]{Lemma}
\newtheorem{prop}[theorem]{Proposition}

\newtheorem{application}[theorem]{Application}

\def\bet{\begin{theorem}}
	\def\ent{\end{theorem}}
	\def\k{\kappa}
%\numberwithin{equation}{section}

%%%Input Defs

\def\bK{\mathbf{K}}

%%%%

\def\om{\omega}

\newcommand{\m}{\mu}
\newcommand{\Om}{\Omega}
\newcommand{\Ga}{\Gamma}

\def\bK{\mathbb{K}}

\newcommand{\cS}{{\mathcal S}}

\setlength{\parskip}{.1in} \setlength{\parindent}{0in}

\def\1{\mathbf{1}}

\def\e{\varepsilon}

\def\lab{\label}

\def\f{\frac}

\def\a{\alpha}
\def\su{\subseteq}
\def\s{\sigma}
\def\r{\rho}

\def\ff{\infty}
\def\R{\mathbb R}
\def\K{\mathbb K}
\def\Z{\mathbb Z}

\def\om{\omega}
\def\Om{\Omega}

\def\bfx{\boldsymbol{x}}
\def\bfy{\boldsymbol{y}}
\def\bfz{\boldsymbol{z}}
\def\bfo{\boldsymbol{o}}
\def\es{\varnothing}
\def\de{\delta}
\def\bel{\begin{lemma}}
\def\bep{\begin{proof}}
\def\enp{\end{proof}}
\def\enl{\end{lemma}}

\def\d{{\rm d}}
\def\tff{\uparrow\infty}

\def\ms{\mathsf}
\renewcommand\le{\leqslant}
\renewcommand\ge{\geqslant}
\def\one{\mathbbmss{1}}
\def\wt{\widetilde}
\def\mc{\mathcal}
\def\PP{\mc P}
\def\vp{\varphi}
\def\been{\begin{enumerate}}
\def\enen{\end{enumerate}}
\def\bee{\begin{application}}
\def\ene{\end{application}}
\def\im{\item}
	
	\def\ms{\mathsf}
	\def\co{\colon}

\begin{document}
	
%%%%%%%%%%%%%%%%%%%%%%%%%%%%%%%%%%%%FRONTPAGE%%%%%%%%%%%%%%%%%%%%%%%%%%%%%%%%%%%%%
%\bibliographystyle{abbrv}
	
% "Title of the paper"
\title[Poisson approximation in weighted RCMs]{\large{Poisson approximation of fixed-degree nodes in weighted random connection models}}

\author{Christian Hirsch}
\author{Benedikt Jahnel}
\author{Sanjoy Kumar Jhawar}
\author{P\'eter Juh\'asz}

\address[Christian Hirsch, P\'eter Juh\'asz]{Department of Mathematics, Aarhus University, Ny Munkegade 118, 8000 Aarhus C, Denmark}
\email{hirsch@math.au.dk, peter.juhasz@math.au.dk}
\address[Christian Hirsch]{DIGIT Center, Aarhus University, Finlandsgade 22, 8200 Aarhus N, Denmark}
\address[Benedikt Jahnel, Sanjoy Kumar ~Jhawar]{Weierstrass Institute for Applied Analysis and Stochastics Berlin, Mohrenstrasse 39, 10117 Berlin, Germany}
\email{jahnel@tu-braunschweig.de, jhawar@wias-berlin.de}
\address[Benedikt Jahnel]{
Institut f\"ur Mathematische Stochastik, Technische Universit\"at Braunschweig,
Universit\"atsplatz 2, Braunschweig}

\date{\today}
\noindent  \thanks{  }
\maketitle
\begin{abstract}
We present a process-level Poisson-approximation result for the degree-$k$ vertices in a high-density weighted random connection model with preferential-attachment kernel in the unit volume.
Our main focus lies on the impact of the left tails of the weight distribution for which we establish general criteria based on their small-weight quantiles.
To illustrate that our conditions are broadly applicable, we verify them for weight distributions with polynomial and stretched exponential left tails.
The proofs rest on truncation arguments and  a recently established quantitative Poisson approximation result for functionals of Poisson point processes.
\end{abstract}
\noindent\textit {Key words and phrases.} Poisson approximation, scale-free network, inhomogeneous random connection model, weighted random connection model, connectivity

\noindent\textit{AMS 2010 Subject Classifications.} Primary: 60D05,\, % Geometric probability and stochastic geometry 
60G70. % Extreme value theory; extremal processes
Secondary: 60G55, %Point processes
05C80. % Random graphs.

\section{Introduction}

%MOT
Spatial random networks are found in a wide variety of applications ranging from social networks over materials science to telecommunication systems~\cite{haenggi2012stochastic, jahnel2020probabilistic}.
In particular, in the context of such networks, it is essential to estimate the probability that we observe extreme realizations of the key network characteristics, and to understand the reasons leading to such extreme behavior.
This need motivates the extension of the classical findings from extreme-value theory to the context of spatial random networks.

%PENROSE
A seminal paper in this context is~\cite{penrose} which studies the asymptotic behavior of the number of degree-$k$ nodes in the inhomogeneous random connection model (RCM).
More precisely, the main result of~\cite{penrose} states that the number of degree-$k$ nodes converges to a Poisson distribution under a suitable scaling of the connectivity threshold in the connection function.

%EJP
While the extensions of Stein's method developed in~\cite{penrose} are highly interesting from a mathematical point of view, it is not always easily applicable in practice.
The reason is that the RCM can only produce light-tailed degree distributions, whereas many real-world networks exhibit heavy tails.
In order to overcome this limitation,~\cite{ejp} extended the results to the scale-free RCM introduced in~\cite{Deprez2018,sfp}.
These networks produce heavy-tailed degree distributions by endowing the vertices with suitable weights that strongly influence their ability to connect to other vertices.

%SCALING
While the scale-free RCM comes with many parameters, it is an important finding in~\cite{ejp} that most of them influence the connectivity threshold only through multiplication by a constant.
This could point to a potential weakness of the scale-free RCM since, depending on the application, we would expect also a wide variety of extreme-value scalings.

%LEFT TAIL
In this paper, we resolve this potential misconception by showing that, indeed, scale-free RCMs can give rise to a wide variety of different extreme-value scalings.
We stress that these findings do not contradict the results in~\cite{ejp}, since we also allow for the variation of the left tail of the weight distribution.
Due to its importance for the degree distribution, the majority of the existing literature focuses exclusively on studying the effects of the right tail.
However, one of the core findings of our work is that, for the extreme-value behavior of the number of degree-$k$ nodes, it is the left tail of the weight distribution that is of crucial importance.

%EXPL
While this may be surprising at first sight, it is not entirely unexpected.
Indeed, our analysis shows that the most likely reason for seeing a constant order of isolated nodes is that these nodes have an extremely small weight, making it easier for these nodes to be isolated.
On the other hand, it is the left tail that determines how difficult it is for a node to have extremely small weight.
Hence, by allowing modification to the left tail of the distribution of the weights, we can observe a variety of different extreme-value behaviors.

%SIMULATION
Let us illustrate these findings through simulations set up with a constant order of isolated nodes.
In the left panel of Figure~\ref{fig:fig0}, we consider a typical realization of an {\em isolated node} (red) in a weighted RCM of intensity 1000, whose weight distribution has a power-law left tail of tail index 2.
The horizontal axis corresponds to the positions of the nodes, whereas the vertical axis shows their weights.
Edges are not shown.
Loosely speaking, we see that most of the network looks like a typical realization of a Poisson point process.
However, the weight of the red node at the origin is atypically small, making it easy to avoid connections.
Our main result makes this intuition precise by providing a quantitative prediction for how small the weight of the origin must be in order to see a constant order of isolated nodes.
In the right panel of Figure~\ref{fig:fig0}, we show a log-log plot of the weight of a typical isolated node against the point-process intensity.
The plot is approximately linear with slope $-0.4682$, and thus close to our theoretical prediction $-1/2$.
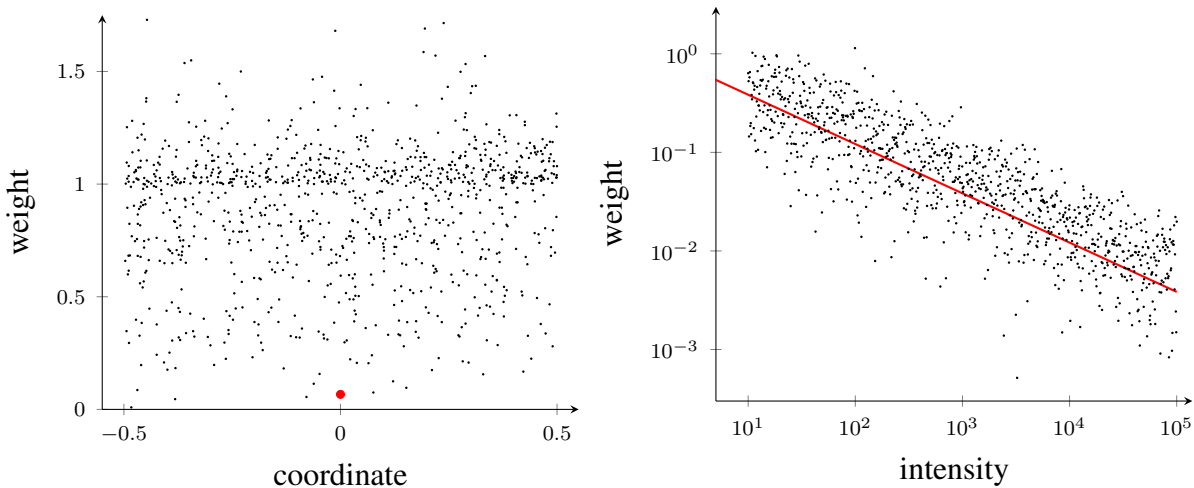
\begin{figure}[htb] \centering
    \begin{tikzpicture}
      \begin{axis}[
        width=0.45\textwidth,
        %axis equal image,
        axis x line=bottom,
        axis y line=left,
        xmin=-0.55, xmax=0.55,
        ymin=0, ymax=1.75,
        xtick={-0.5, 0, 0.5},
        xlabel=coordinate,
        ylabel=weight,
        minor tick style={draw=none},
        ticklabel style={font=\tiny}
      ]
        \addplot[only marks, mark size=0.25pt] table [col sep=comma, x=coordinate, y=mark] {point_coordinates_marks.csv};
        \addplot[mark=*, mark size=1.5pt, only marks, red] coordinates {(0, 0.0667)};
      \end{axis}
    \end{tikzpicture}
    \begin{tikzpicture}
      \begin{axis}[
        width=0.45\textwidth,
        %axis equal image,
        axis x line=bottom,
        axis y line=left,
        xmin=5, xmax=140000,
        ymin=3e-4, ymax=3,
        xmode=log, ymode=log,
        xlabel=intensity,
        ylabel=weight,
        minor tick style={draw=none},
        ticklabel style={font=\tiny}
      ]
        \addplot[only marks, mark size=0.25pt] table [col sep=comma, x=intensity, y=typical_mark] {intensity_typical_mark.csv};
        \addplot[domain=0.1:1e5, red, thick]{1.2144 * x^(-0.5)};
      \end{axis}
    \end{tikzpicture}
  \caption{
    Realization of an isolated node (red) in a weighted RCM with intensity 1,000 and weight distribution with power-law left tails of tail index 2 (left).
    Log-log plot of the weight of a typical isolated point against varying point-process intensities (right).
  }
  \label{fig:fig0}
\end{figure}

The main contribution of our paper can be summarized as follows.
\been
\im While the results in~\cite{penrose,ejp} tentatively indicate that the rare-event behavior of the degree-$k$ nodes is not strongly affected by the parameters, we show that, in fact, a wide variety of extreme-value scalings can be obtained.
This is achieved by modifying the left tail of the weight distribution.
As specific examples, we consider the case where the left tail is of power-law or of Fr\'echet type.
\im Our results go beyond the scale-free RCM and consider the \emph{weighted RCM (WRCM)} introduced in~\cite{glm,glm2,gracar2022recurrence,komjathy2020explosion}.
WRCMs specify a kernel function and therefore allow to consider models of spatial preferential attachment.
\im In contrast to~\cite{penrose,ejp}, we not only look at the number of degree-$k$ nodes, but also on their spatial distribution.
In other words, we prove convergence of the degree-$k$ nodes to a Poisson point process.
This is accomplished through the application of a recently developed functional Poisson-approximation result from~\cite{BSY202}.
\enen

The rest of the manuscript is organized as follows.
In Section~\ref{sec:mod}, we recall the definition of the WRCM and state Theorem~\ref{thm:apx} as our main result on the Poisson approximation of the degree-$k$ nodes.
Loosely speaking, the precise rare-event behavior is encoded in a characterizing equation that prominently involves the weight distribution.
Next, in Section~\ref{sec:exa}, we illustrate that the conditions on the weight distribution are meaningful as they cover a wide range of natural models.
Finally, in Section~\ref{sec:proofs}, we present the proofs of the above results.

%
%SEC MOD & RES
%
\section{The inhomogeneous random connection model and main results}
\label{sec:mod}
We now recall from~\cite{glm,glm2} the precise definition of the kernel-based spatial random networks that are the object of our study.
We denote by $S=[0, 1]^d$ the $d$-dimensional unit cube with $d\ge 1$.
We consider henceforth a random graph with vertex set given by a homogeneous Poisson point process $\PP_s$ on $\R^d$ with intensity $s > 0$.
Independently to each $x \in \PP_s$, we associate a random weight $W_x$ drawn from a distribution with cumulative distribution function $F(w) := F_W(w) = \P(W \leq w)$ on $(0, \ff)$.
The probability that there is an edge between any two vertices $x, y \in \PP_s$ is a function of their distance and their weights $W_x$ and $W_y$, i.e., 
\begin{equation} p_s(x, W_x; y, W_y) := \vp\big(|B_{|x - y|}(o)|/(v_s\k(W_x,W_y))\big), \lab{eq:connection_function_s} \end{equation}
where $v_s$ is the scaling factor depending on the intensity, $|B_{|x - y|}(o)|$ is the volume of the centered Euclidean ball with radius $|x - y|$, and $\k$ and $\vp$ are the \emph{kernel} and the \emph{profile function} of the model that are specified as follows.
As kernel, we consider the preferential-attachment kernel from~\cite{glm,glm2}, i.e.,
$$ \k(w_1,w_2) = (w_1\wedge w_2) (w_1\vee w_2)^a $$ 
for some $a\ge 0$.
The profile function is a non-negative function $\vp \co [0, \ff) \to [0, 1]$ satisfying the normalization condition $\int_0^\ff\vp(r) \d r = 1$, and regularly varying at infinity with tail index $\a > 1$.
We assume that $1-F$ is also regularly varying at infinity with tail index $\beta>a\a$, in particular, $\mu_{a\a}<\ff$ with $\mu_r:=\E[W^{r}]$.
%\eqref{eq:connection_function_s} 
The resulting random graph is denoted by $G(\PP_s, v_s)$.
We highlight that the parameter $sv_s$ has a natural interpretation in terms of the network model.
Indeed, as we will see in Application~\ref{ee:dego} in Section~\ref{sec:proofs}, $sv_s$ is the order of the expected number of neighbors of a typical network node.

We are interested in the spatial distribution of nodes with a given degree, i.e., 
\begin{align} \label{def:degPP} \xi_{s} := \xi_{s,k} := \sum_{x \in \PP_s \cap [0,1]^d} \one \{ \deg(x) = k \text{ in } G(\PP_s, v_s) \} \delta_x. \end{align}
Note that $\deg(x)$ is the number of all points connected to $x$, including those that lie outside of the unit cube $[0,1]^d$.
In order to specify the scaling $v_s$, we first consider $D_s := D_{s,k} := \xi_{s,k} ([0,1]^d)$, the number of degree-$k$ vertices in $G(\PP_s, v_s)$ that are contained in $[0, 1]^d$.
We identify the correct scaling for $v_{s,k}$ such that $\E[D_{s,k}]$ is constant.
For this, let us introduce the decomposition
\begin{align} \label{notation} h(w) := w \mu_+(w) + w^a \mu_-(w) := w \E \big[ \one \{W \ge w\} W^a \big] + w^a \E \big[ \one \{W < w\} W \big], \end{align}
of $h(w) = \E[\kappa(w,\cdot)]$ and consider the scaling $v_s$ defined as the largest solution of the equation
\begin{equation} k! =s\E[(s v_{s,k} h(W))^k\exp(-s v_{s,k} h(W))], \tag{\bf SCG} \lab{eq:scaling} \end{equation}
for $k \ge 0$.
We note that such a solution must exist at least for all sufficiently large $s$.
Indeed, take for example $v_{s,k} = c/s$, then Equation~\eqref{eq:scaling} can be rewritten as $s = k! / \E[(ch(W))^k \exp(-ch(W))]$, where the right-hand side tends to infinity for $c\to\ff$.
The following result establishes the correct scaling, with its proof presented in Section~\ref{sec:proofs}.
\begin{lemma}[Expected typical degree]\lab{theorem:Ex_poisson_conv}
Let us fix $k \geq 0$ and consider the random graph $G(\PP_s, v_{s,k})$ with the connection function $p_s$ of the form~\eqref{eq:connection_function_s} and scaling parameter $v_{s,k}$ as defined by~\eqref{eq:scaling}.
Then,
\begin{equation} \E[D_{s,k}]=1. \lab{eq:exp_d_k} \end{equation}
\end{lemma}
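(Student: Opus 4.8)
The plan is to compute $\E[D_{s,k}]$ exactly by means of the Mecke equation for the weighted Poisson point process, and then to recognize the resulting quantity as, by construction, the right-hand side of the scaling equation~\eqref{eq:scaling}.

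\textbf{Step 1 (Mecke formula).} View $\PP_s$ as a Poisson point process on $\R^d\times(0,\ff)$ with intensity measure $s\,\d x\otimes F(\d w)$, where the second coordinate records the weight. Writing
$D_{s,k}=\sum_{(x,w)\in\PP_s}\one\{x\in[0,1]^d\}\,\one\{\deg((x,w))=k\text{ in }G(\PP_s,v_{s,k})\}$
and applying the Mecke equation gives
\[
\E[D_{s,k}]=s\int_{[0,1]^d}\int_0^\ff \P\big(\deg((x,w))=k\text{ in }G(\PP_s+\delta_{(x,w)},v_{s,k})\big)\,F(\d w)\,\d x .
\]

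\textbf{Step 2 (the neighbour count is Poisson).} Fix $x$ and $w$. By the definition of the WRCM, conditionally on the weighted configuration the added vertex $(x,w)$ is joined to each $(y,\tilde w)\in\PP_s$ independently with probability $p_s(x,w;y,\tilde w)$. Hence, by the marking (thinning) theorem for Poisson processes, the set of neighbours of $(x,w)$ is itself a Poisson point process, so $\deg((x,w))$ is Poisson distributed with parameter
\[
\Lambda(x,w)=s\int_{\R^d}\int_0^\ff \vp\big(|B_{|x-y|}(o)|/(v_{s,k}\k(w,\tilde w))\big)\,F(\d\tilde w)\,\d y .
\]
The spatial integral is evaluated by the substitution $u=|B_{|x-y|}(o)|$: for any $c>0$ one gets $\int_{\R^d}\vp(|B_{|x-y|}(o)|/c)\,\d y=\int_0^\ff\vp(u/c)\,\d u=c\int_0^\ff\vp(t)\,\d t=c$ by the normalization $\int_0^\ff\vp=1$. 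Taking $c=v_{s,k}\k(w,\tilde w)$ and integrating over $\tilde w$ yields $\Lambda(x,w)=s\,v_{s,k}\,\E[\k(w,W)]=s\,v_{s,k}\,h(w)$, which is finite for each fixed $w$ because $\mu_-(w)\le w$ and $\mu_+(w)\le\mu_a<\ff$ (the latter since $\mu_{a\a}<\ff$ and $\a>1$).

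\textbf{Step 3 (conclusion).} Inserting $\P(\deg((x,w))=k)=(s v_{s,k} h(w))^k e^{-s v_{s,k} h(w)}/k!$ into the Mecke expression, and using that $t\mapsto t^k e^{-t}$ is bounded on $[0,\ff)$ so that Fubini applies, the $x$-integral over $[0,1]^d$ contributes a factor $1$ and we obtain
\[
\E[D_{s,k}]=\frac{s}{k!}\,\E\big[(s v_{s,k} h(W))^k e^{-s v_{s,k} h(W)}\big],
\]
which equals $1$ by the defining equation~\eqref{eq:scaling} for $v_{s,k}$. The only genuinely delicate point is Step~2: one must extract from the model definition that edges are drawn conditionally independently given positions and weights, which is precisely what licenses the marking theorem (and one should also record that $\Lambda(x,w)<\ff$ so the Poisson law is well defined); the remaining steps are a routine change of variables and bookkeeping.
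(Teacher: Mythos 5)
Your proposal is correct and follows essentially the same route as the paper: Mecke's formula reduces $\E[D_{s,k}]$ to $s$ times the probability that a typical inserted point has degree $k$, the conditional degree given the weight is recognized as Poisson with mean $s v_{s,k} h(W)$ (computed via the same change of variables that the paper carries out in its expected-degree lemma), and the defining equation~\eqref{eq:scaling} then gives the value $1$. The only cosmetic difference is that the paper routes the mean computation through its auxiliary Lemma~\ref{lem:deg} on the marked process $\wt{\PP}_s$, while you perform the thinning and spherical substitution directly.
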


While the definition of $v_{s,k}$ is indirect, in Section~\ref{sec:exa}, we illustrate that the order of $v_{s,k}$ can be computed for given natural choices of the weight distribution as a function of the intensity $s$.
The value of the parameter $v_{s,k}$ has important implications for the network topology as it reveals the order of the expected number of neighbors of a typical node, $s v_{s,k}$.
More precisely, we will see that, for polynomial tails, $s v_{s,k}$ is of polynomial order, while for stretched exponential tails, it is of polylogarithmic order.
This reflects the intuition that for polynomial tails, it is more likely that low-weight nodes appear, which means that even for relatively large values of $s v_{s,k}$, it is reasonably likely that a low-weight node is isolated.
For stretched exponential tails, it is less likely to create low-weight nodes, which means that even for smaller values of $s v_{s,k}$, it is unlikely that a low-weight node is isolated.
These observations are to be contrasted with the finding from~\cite{ejp} that for a lower-bounded weight distribution, $s v_{s,k}$ is of a much smaller, namely, logarithmic order.

\medskip
Having established the convergence of the expected degree counts in Lemma~\ref{theorem:Ex_poisson_conv}, in Theorem~\ref{thm:apx} below, we prove the convergence of the degree distribution itself in the sense of a Poisson point process approximation result.
Note that this result is only valid under certain assumptions on the distributions of the weights $W$, which we now collect.
These assumptions are rather technical but are a key component of the approximation arguments in our proof.
In Section~\ref{sec:exa}, we illustrate how to verify these assumptions and present examples for weight distributions exhibiting a variety of left tails.

A central role in our proof is played by the $1/(2s)$-quantile of the weight distribution, which we denote by $w_s$.
That is, $F(w_s) = 1/(2s)$.
The importance of this quantity stems from the intuition that it is a first indication for the typical weight of an isolated node in $[0, 1]^d$.
Indeed, nodes of weights much smaller than $w_s$ are unlikely to appear in $[0, 1]^d$, whereas nodes of weights much larger than $w_s$ are unlikely to be isolated.
We stress that the precise interpretation of \emph{much smaller} and \emph{much larger} may depend on the tail distribution.
This is one of the main reasons why the following assumptions are rather technical.
Let $\de:=(\a - 1)/2$.
Our assumptions require that, for some $K > 0$ and $\eta \in(0,1)$,
\been[label=\textbf{A.\arabic*},ref=A.\arabic*]
\im \label{as:1} $\liminf_{s\uparrow\ff} s v_{s,k} w_s^\eta/\log(s)=\ff$,
\im \label{as:2} $\limsup_{s\uparrow\ff} \log(s) w_s^{- (K + 1)(1 - \eta)}F(w_s^\eta)=0$ and 
\im \label{as:3} $\limsup_{s\uparrow\ff} \log(s) w_s^{ (K\de - 1)(1 - \eta)}=0$.
\enen
In order to present our main result, let $\zeta$ denote a Poisson point process with intensity ${\rm{Leb}}(\d x):=\one\{x\in [0,1]^d\}\d x$ and let
$$d_{\rm KR}(\xi,\xi'):=\sup\{|\E[h(\xi)]-\E[h(\xi')]|\colon h\in{\rm LIP}\}$$
denote the Kantorovich--Rubinstein distance between the distributions of the two processes $\xi$ and $\xi'$.
Here ${\rm LIP}$ is the class of measurable $1$-Lipschitz functions with respect to the total variation distance on the space of finite point configurations on $[0,1]^d$.
\begin{theorem}[Poisson approximation]\label{thm:apx}
Let us fix $k \geq 0$ and consider the random graph $G(\PP_s, v_{s,k})$ with the connection function $p_s$ of the form~\eqref{eq:connection_function_s} and 
the scaling parameter $v_{s,k}$ as defined by~\eqref{eq:scaling}.
Then,
\begin{equation}\label{eq:poisson_D_k_b}
\lim_{s\tff} d_{\rm KR}(\xi_{s,k}, \zeta) = 0.
\end{equation}
\end{theorem}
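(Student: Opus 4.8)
The plan is to apply the functional Poisson approximation result of~\cite{BSY202} to the point process $\xi_{s,k}$, which is a functional of the Poisson point process $\PP_s$ (together with its i.i.d.\ weight marks, so that we really work on the marked space $\R^d\times(0,\ff)$). Such a result typically bounds $d_{\rm KR}(\xi_{s,k},\zeta)$ by a sum of terms built from the first- and second-order Palm expectations of the score functions, i.e.\ expressions of the shape $\E[\text{(score at $x$)}\cdot\text{(score at $y$, after adding $x$)}]$ integrated over $x,y$, plus a term measuring how much adding a point far away changes the score (a stabilization/localization term). The first step is therefore to record the precise statement from~\cite{BSY202} and identify the score function: here the score of a point $x$ is $\one\{\deg(x)=k\text{ in }G(\PP_s\cup\{x\},v_{s,k})\}$ for $x\in[0,1]^d$. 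Since $\E[D_{s,k}]=1$ by Lemma~\ref{theorem:Ex_poisson_conv}, the intensity measure of $\xi_{s,k}$ already has the right total mass, and what remains is to show the higher-order correction terms vanish as $s\tff$.

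The second step, and the technical heart of the argument, is a \emph{truncation}: we split according to whether a node has weight below or above the threshold $w_s^{\eta}$ (and possibly an intermediate scale). Nodes with weight $\ge w_s^{\eta}$ are ``heavy'': such a node has expected degree of order $s v_{s,k} h(W)\gtrsim s v_{s,k} w_s^{\eta}$, which by Assumption~\ref{as:1} grows faster than $\log s$, so the probability that such a node has degree exactly $k$ is $o(1/s)$ uniformly; summing over the $O(s)$ heavy nodes in $[0,1]^d$ shows heavy nodes contribute negligibly to $D_{s,k}$ and hence cannot affect the Poisson limit. Nodes with weight $<w_s^{\eta}$ are ``light'': they are rare, since by definition $F(w_s^{\eta})$ is small, but this is exactly where Assumptions~\ref{as:2} and~\ref{as:3} are used — \ref{as:2} controls the probability that \emph{two} light nodes appear close enough to interact (this is the dominant second-moment term), while \ref{as:3} controls the residual probability, after conditioning on a light node, that it fails to be isolated-or-degree-$k$ because of a node at distance where the profile function's regularly varying tail (index $\a$, entering through $\de=(\a-1)/2$) has not yet decayed. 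In other words, \ref{as:2} kills the ``clustering of rare events'' term and \ref{as:3} kills the ``long-range dependence'' term in the Poisson approximation bound.

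The third step is to assemble these estimates: one shows that the first-order term matches $\zeta$'s intensity up to $o(1)$ (using Lemma~\ref{theorem:Ex_poisson_conv} and the heavy/light split to replace the exact degree count by its light-node approximation), the second-order / product term is bounded by a constant times $\log(s)\,w_s^{-(K+1)(1-\eta)}F(w_s^{\eta})\to 0$ via~\ref{as:2}, and the stabilization term is bounded by a constant times $\log(s)\,w_s^{(K\de-1)(1-\eta)}\to 0$ via~\ref{as:3}; the factors $\log(s)$ and the powers of $w_s^{1-\eta}$ arise naturally as the volume of a localization ball (radius chosen so that a heavy node inside it would be connected with overwhelming probability) times the number of Poisson points one must control. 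Plugging into the bound from~\cite{BSY202} gives $d_{\rm KR}(\xi_{s,k},\zeta)\to 0$.

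I expect the main obstacle to be the light-node analysis: making rigorous the claim that, conditioned on a light node $x$ of weight $w<w_s^{\eta}$ at a given location, the event $\{\deg(x)=k\}$ decouples — up to the error terms controlled by~\ref{as:2}–\ref{as:3} — from the configuration elsewhere, and simultaneously verifying that the geometry (volume of the relevant ball $B_{r}(o)$ with $|B_r(o)|\asymp v_{s,k}\k(w,W_y)$, the regular variation of $\vp$ with index $\a$, and the moment condition $\mu_{a\a}<\ff$) conspires to give exactly the exponents appearing in \ref{as:1}–\ref{as:3}. Getting the bookkeeping of these competing scales right — which power of $w_s$, which appearance of $\log s$, and where the constant $K$ has freedom — is where the real work lies; everything else is a fairly mechanical application of the abstract theorem combined with Lemma~\ref{theorem:Ex_poisson_conv}.
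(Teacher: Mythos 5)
Your overall route coincides with the paper's: truncate in weight at the scale $w_s^\eta$ (with Assumption~\ref{as:1} disposing of heavy nodes), localize in space at the scale $V_o(s)=w_s^{K(\eta-1)}$ (with Assumption~\ref{as:3} controlling the long-range error), control pair interactions of rare light nodes via Assumption~\ref{as:2}, and feed the result into the functional Poisson approximation of~\cite{BSY202}; you even predict the exact error scalings $\log(s)\,w_s^{-(K+1)(1-\eta)}F(w_s^\eta)$ and $\log(s)\,w_s^{(K\de-1)(1-\eta)}$ that appear in the paper's estimates (Proposition~\ref{pr:neg} and Lemmas~\ref{lem:e2}--\ref{lem:e3}).

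There is, however, a genuine gap in how you plan to invoke the abstract theorem. You propose to apply~\cite[Theorem 4.1]{BSY202} directly to $\xi_{s,k}$ and to absorb the non-locality of the degree into a ``stabilization term'' of the bound. As stated this step fails: since the profile function has unbounded support, the event $\{\deg(x)=k\}$ is not determined by the configuration in any bounded stopping set (one can never certify the absence of far-away neighbours from a bounded window), so there is no admissible localization set to plug into the $E_1$ term, and the theorem contains no generic remainder for non-localized scores. The paper's resolution is structural: it first proves, via Markov's inequality and the Mecke formula, that $d_{\rm KR}(\xi_s,\bar\xi_s^{V_o(s),w_s^\eta})\to 0$, where the truncated score keeps only weights below $w_s^\eta$ and computes the degree only from points of \emph{larger} weight inside a ball of volume $v_sW_xV_o(s)$ (Proposition~\ref{pr:neg}); only then is~\cite{BSY202} applied to the truncated process, for which the localization set is deterministic and $E_1\equiv 0$ (Proposition~\ref{pr:qa}). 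Two further ingredients you omit but which are needed to make this rigorous: the edge randomness must be encoded as additional iid $[0,1]$-valued marks so that $\xi_{s,k}$ is a genuine functional of a Poisson process to which Mecke/Palm calculus applies, and the restriction of the truncated degree to neighbours of larger weight is precisely what allows the added point $\bfz$ to be discarded in the $E_3$ cross term (Lemma~\ref{lem:e3}). Apart from these points, the heavy/light dichotomy and the roles you assign to Assumptions~\ref{as:1}--\ref{as:3} match the paper's proof, but the substantive estimates you defer to ``bookkeeping'' constitute the bulk of the argument.
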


We note that Theorem~\ref{thm:apx} is the analog of~\cite[Theorem 3.2]{ejp} for the case where the weight distribution has positive mass arbitrarily close to 0.
More precisely, we note that in~\cite{ejp}, the connection probability is given by 
$$ 1 - \exp \big( - \eta r_s^\a W_x W_y / |x-y|^\a \big) $$
for some parameters $\eta$, $r_s$, $\a$.
Hence, this can be written as $\vp\big( |B_{|x - y|}(o)| / (v_{s,k} \k(W_x', W_y')) \big)$, where $v_{s,k} := r_s^{d}$, $W_x' := W_x^{d/\a}$, the parameter $a = 1$ in the kernel $\k$ and
$$ \vp(r) := 1 - \exp\big( -\eta |B_1(o)|^{\a/d} r^{-\a/d} \big). $$

We also note that in~\cite{penrose}, a straightforward extension of the arguments for the degree-$k$ vertices also yields the Poisson approximation for size-$k$ components.
However, in our setting, the introduction of the weights makes the analysis of the size-$k$ components substantially more involved.
Indeed, such an analysis would rely on a highly delicate configurational analysis of the weights in such components.
These weights need to be small enough to ensure that there are no connection to outside nodes, while simultaneously they need to be large enough to ensure the connectivity between the nodes in the component.
While such an analysis is not entirely out of range, it would require additional constraints on the weight distribution as well as a substantially more refined analysis.
Hence, to give a focused presentation of the main ideas, we refrain from carrying out such an analysis here.

Moreover, as mentioned in the introduction, the main tool of the proof is a recently developed functional Poisson-approximation result from~\cite[Theorem 4.1]{BSY202}.
Here, we note that~\cite{BSY202} also gives a functional Poisson-approximation result for the nearest-neighbor radii.
However, this result heavily relies on the specific form of the isolation probability for the standard random geometric graph on a Poisson point process.
In particular, such a result does not extend easily to the present setting, where the isolation probability depends in a complicated way both on the weights and on the profile function.

Finally, we note that, while~\cite[Theorem 4.1]{BSY202} provides a rate of convergence, we refrain from stating such rates here.
This is because the complexity of our model forces us to make approximations at several instances that are presumably suboptimal.
Hence, while it would be possible to extract specific convergence rates from our proof, they would be far from optimal as well.
Since a streamlined proof without tracking the rates is substantially more accessible, we decided to present the proof in this form.

%
%SEC EXA
%
\section{Examples}
\label{sec:exa}

The goal of this section is to provide examples for the weight distributions and show that they fulfill the Assumptions~\ref{as:1},~\ref{as:2},~\ref{as:3} listed in Section~\ref{sec:mod}.
In Sections~\ref{ex:pol} and~\ref{ex:exp}, we discuss examples for weight distributions with polynomial and stretched exponential left tails, respectively.

Let us start by stating some apriori estimates of our parameters.
Let $\s_s := \s_{s,k} := s v_{s,k}$ and recall the definitions in~\eqref{notation}.
%
%LEM EST
%
\begin{lemma}\label{lem:estimates}
It holds that
\begin{enumerate}
    \item $\mu_-(w)\le wF(w)$,
    \item $w^{a}(1-F(w)) \le \mu_+(w)\le \mu_a$,
	    \im  $\lim_{w \to 0} \m_+(w) = \mu_a$,
		\im $\lim_{w \to 0} h(w) / w = \mu_a$, and
    \item $\s_{s,k} w_s \in O(\log s)$.
\end{enumerate}
\end{lemma}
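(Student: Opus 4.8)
The plan is to treat the five claims in order, each following from elementary manipulations of the definitions in~\eqref{notation} together with the regular-variation hypotheses and the scaling equation~\eqref{eq:scaling}. For part (1), I would simply bound $\mu_-(w)=\E[\one\{W<w\}W]\le w\,\E[\one\{W<w\}]=wF(w)$, using $W<w$ on the event in question. For part (2), the lower bound comes from $\mu_+(w)=\E[\one\{W\ge w\}W^a]\ge w^a\,\P(W\ge w)=w^a(1-F(w))$ since $W^a\ge w^a$ on $\{W\ge w\}$ (here $a\ge0$), and the upper bound is immediate from $\mu_+(w)\le \E[W^a]=\mu_a$, which is finite because $\beta>a\a\ge a$ guarantees $\mu_{a\a}<\ff$ and hence $\mu_a<\ff$.

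For the two displayed sub-items, $\lim_{w\to0}\mu_+(w)=\mu_a$ follows from dominated convergence: $\one\{W\ge w\}W^a\uparrow W^a$ pointwise as $w\downarrow0$ (on $(0,\ff)$ the weight is strictly positive a.s.), and the integrand is dominated by the integrable $W^a$. Then $h(w)/w=\mu_+(w)+w^{a-1}\mu_-(w)$, and by part (1) the second term is bounded by $w^{a-1}\cdot wF(w)=w^aF(w)\to 0$ as $w\to0$ (using $a\ge0$ so $w^a\to0$ if $a>0$, or $w^a=1$ and $F(w)\to0$ if $a=0$; in the borderline regime one still has $w^aF(w)\to0$ since $F(w)\to0$ and $w^a$ is bounded near $0$ for $a\ge0$). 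Combining with the previous limit gives $h(w)/w\to\mu_a$.

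The substantive part is (5), the bound $\s_{s,k}w_s\in O(\log s)$. Here I would start from the scaling equation~\eqref{eq:scaling}, $k!=s\,\E[(\s_s h(W))^k\exp(-\s_s h(W))]$, and restrict the expectation to the event $\{W\le w_s\}$, which has probability $F(w_s)=1/(2s)$. Using the sub-items just proved, on this event $h(W)$ is comparable to $\mu_a W$, so one obtains a lower bound of the form $k!\ge s\,\E[\one\{W\le w_s\}(\s_s h(W))^k\exp(-\s_s h(W))]\gtrsim s\,\E[\one\{\,\cdot\le W\le w_s\}(\s_s W)^k\exp(-c\,\s_s w_s)]$ for a suitable lower cutoff, which forces $\exp(-c\,\s_s w_s)\lesssim 1/(s\cdot(\text{polynomial in }\s_s w_s))$, and hence $\s_s w_s\lesssim \log s$ after taking logarithms. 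The main obstacle is making this last step fully rigorous: one must control the polynomial factor $(\s_s h(W))^k$ from below on a set of weights of probability of order $1/s$, which requires choosing the truncation window for $W$ carefully (e.g.\ between a small multiple of $w_s$ and $w_s$ itself, using monotonicity of $F$) and invoking the a~priori lower bound on $\s_s w_s$ that is implicit in the existence discussion following~\eqref{eq:scaling}. For $k=0$ the argument is cleaner since the polynomial factor is absent and one directly gets $\exp(-\s_s\mu_a w_s)\gtrsim k!/s$ on the truncated event, immediately yielding the claim.
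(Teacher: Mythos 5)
Parts (1)--(4) of your plan are correct and coincide with the paper's treatment: (1)--(3) are exactly the ``immediate'' bounds, and your proof of (4) via $w^{a-1}\mu_-(w)\le w^aF(w)\to 0$ together with $\mu_+(w)\to\mu_a$ is the same computation the paper carries out.

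Part (5), however, has a genuine gap: you restrict \eqref{eq:scaling} to the event $\{W\le w_s\}$, and that event cannot produce the bound. Since $\P(W\le w_s)=1/(2s)$ and $\sup_{x\ge0}x^k\mathrm{e}^{-x}/k!\le 1$, the restricted term obeys $s\,\E\big[\one\{W\le w_s\}(\sigma_s h(W))^k\mathrm{e}^{-\sigma_s h(W)}\big]/k!\le 1/2$ automatically, for every value of $\sigma_s w_s$; so the inequality $k!\ge s\,\E[\one\{W\le w_s\}\cdots]$ is essentially vacuous and cannot ``force'' anything. In particular, your intermediate claim $\exp(-c\sigma_s w_s)\lesssim 1/(s\cdot\mathrm{poly}(\sigma_s w_s))$ cannot be extracted: to get a factor $1/s$ on the right you would need the window $\{\cdot\le W\le w_s\}$ to have probability of order one, whereas it has probability at most $1/(2s)$. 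Moreover, even if that inequality held, it points the wrong way: $\exp(-c\sigma_s w_s)\le 1/(s\,\mathrm{poly})$ yields $\sigma_s w_s\gtrsim\log s$, a \emph{lower} bound, not the asserted $\sigma_{s,k}w_s\in O(\log s)$. There is also no a~priori lower bound on $\sigma_s w_s$ to invoke --- in the polynomial-tail example of Section~\ref{ex:pol} one even has $\sigma_s w_s=O(1)$ --- and for $k=0$ the same truncation only gives $\exp(-c\sigma_s w_s)\le 2$, not $\gtrsim 1/s$.

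The correct mechanism, which is the paper's, uses the complementary event and the fact that \eqref{eq:scaling} is an \emph{equality}. Write $1=s\,\E[(\sigma_s h(W))^k\mathrm{e}^{-\sigma_s h(W)}]/k!$ and split at $w_s$: the piece on $\{W\le w_s\}$ is at most $1/2$ as above, so the piece on $\{W> w_s\}$ is at least $1/2$. On $\{W>w_s\}$ one has $h(W)\ge h(w_s)\ge c'w_s$ for large $s$ (using that $h$ is non-decreasing and $h(w)/w\to\mu_a>0$; the paper phrases this through a constant $c'$ with $\mu_-(c')>0$), while $(x^k/k!)\mathrm{e}^{-x}\le C\mathrm{e}^{-x/2}$. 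Hence $1/2\le Cs\exp(-c'\sigma_s w_s/2)$, i.e.\ $\exp(-c'\sigma_s w_s/2)\ge 1/(2Cs)$, which gives $\sigma_s w_s\le (2/c')\log(2Cs)=O(\log s)$. Note that the inequality you actually need is $\exp(-c\sigma_s w_s)\gtrsim 1/s$, the reverse of what your truncation was set up to deliver.
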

\begin{proof}
The first three statements are immediate.
For the fourth statement, as $w \downarrow 0$, we can bound the fraction of the two terms of $h(w)$ as
\[ \lim \limits_{w \downarrow 0} \f{w^a \m_- \left( w \right)}{w \m_+ (w)} \le \lim \limits_{w \downarrow 0} w^{a} \, \f{F(w)}{\mu_a} = 0. \]
For the fifth statement, let $c' > 0$ be such that $\mu_-(c') > 0$.
Then, introducing indicators for the events $W < w_s$ and $w_s \le W$ we can bound for some constant $c > 0$,
\begin{align*}
1 = s \E \Big[ \f1{k!} (\s_s h(W))^k\exp(- \s_s h(W)) \Big] &\le 1/2 + s \E \Big[ \f1{k!}(\s_s h(W))^k \exp(- \s_s h(W)) \one \{ W > w_s \} \Big] \\
        &\le 1/2 + cs \E \big[ \exp(-\s_s h(W) / 2) \one \{ w_s < W < c \} \big] \\
        &\le 1/2 + cs \exp(- \s_s w_s \mu_-(c')),
\end{align*}
where we also used that $\sup \{ x^k \exp(-x/2) \colon x \ge 0\} < \ff$.
Hence, $\s_s w_s \mu_-(c') \le 2 \log(2cs)$, and thus the result follows.
\end{proof}

The starting point of the computations in this section is the Equation~\eqref{eq:scaling}.
We use that $\lim_{s\tff} \s_s =\ff$.
This is so because the expectation in~\eqref{eq:scaling} must be zero in the limit so that the left-hand side of~\eqref{eq:scaling} is constant.

%
%EX POL
%
\subsection{Polynomial left tails}
\label{ex:pol}
First, we consider the setting with polynomial left tails, where we assume that 
\[ F(w) = pw^\r \quad \text{ for }p, \r > 0 \text{ and for all }w \le b\text{ for some }b > 0.\]
Note that the $1/(2s)$-quantile $w_s$ of the weight distribution is given by
\[ \P(W \le w_s) = 1/(2s) \quad\Longleftrightarrow\quad p \, w_s^\r = 1/(2s) \quad\Longleftrightarrow\quad w_s = \left( 2 p s \right)^{-1/\r}. \]

%
%LEM EQUAL LIMIT
%
Let us first establish the scaling of $\s_s$.
\begin{lemma}[$\s_s$-scaling for polynomial left tails]
Let $k \geq 0$ and let $v_{s}$ be as defined in~\eqref{eq:scaling}.
Then
\[ \lim \limits_{s \tff} \f{\s_{s}^\r}s = \f{p\r}{k!\mu_a^\r} \, \Ga \left( k + \r \right). \]
\label{lemma:equal_limit_L}
\end{lemma}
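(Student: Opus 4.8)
The plan is to extract the asymptotics of $\sigma_s$ directly from the scaling equation~\eqref{eq:scaling} by substituting the explicit form of $F$ near $0$ and performing a change of variables. Starting from
$$ k! = s\,\E\big[(\sigma_s h(W))^k \exp(-\sigma_s h(W))\big], $$
I would first argue that the expectation is asymptotically dominated by the contribution of small weights. Concretely, split the expectation according to whether $W \le b$ or $W > b$. On $\{W > b\}$ we have $h(W) \ge h(b) > 0$ bounded below, so using $\sup\{x^k e^{-x/2}\colon x\ge 0\}<\infty$ together with $\sigma_s \to \infty$, that part of $s\,\E[\cdots]$ is $O(s\exp(-c\sigma_s))$. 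But we cannot yet conclude this is negligible without knowing $\sigma_s \gg \log s$; here Lemma~\ref{lem:estimates}(4) gives only $\sigma_s w_s \in O(\log s)$, and $w_s = (2ps)^{-1/\rho}\to 0$, so in fact $\sigma_s$ could grow slowly. So instead I would keep the analysis self-contained: since $h$ is continuous with $h(w)/w \to \mu_a$ as $w\downarrow 0$ (Lemma~\ref{lem:estimates}, items (iii)--(iv)), for any $\varepsilon>0$ there is $b_\varepsilon \le b$ with $(\mu_a-\varepsilon)w \le h(w) \le (\mu_a+\varepsilon)w$ for $w \le b_\varepsilon$, and I would work on $\{W \le b_\varepsilon\}$.

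On this event, using the density $F'(w)=p\rho w^{\rho-1}$ on $(0,b]$,
$$ s\,\E\big[(\sigma_s h(W))^k e^{-\sigma_s h(W)} \one\{W\le b_\varepsilon\}\big] = s\int_0^{b_\varepsilon} (\sigma_s h(w))^k e^{-\sigma_s h(w)}\, p\rho w^{\rho-1}\,\d w. $$
Substituting $u = \sigma_s w$ (so $w = u/\sigma_s$, $\d w = \d u/\sigma_s$) turns this into
$$ \frac{s p\rho}{\sigma_s^{\rho}}\int_0^{\sigma_s b_\varepsilon} \big(\sigma_s h(u/\sigma_s)\big)^k e^{-\sigma_s h(u/\sigma_s)}\, u^{\rho-1}\,\d u, $$
and since $\sigma_s h(u/\sigma_s) \to \mu_a u$ pointwise (again by $h(w)/w\to\mu_a$) with the two-sided bound $(\mu_a-\varepsilon)u \le \sigma_s h(u/\sigma_s)\le(\mu_a+\varepsilon)u$ valid once $u/\sigma_s \le b_\varepsilon$, dominated convergence gives
$$ \int_0^{\infty} (\mu_a u)^k e^{-\mu_a u}\, u^{\rho-1}\,\d u = \mu_a^{-\rho}\int_0^\infty t^{k+\rho-1}e^{-t}\,\d t = \mu_a^{-\rho}\,\Gamma(k+\rho) $$
as the limit of the integral (the domination being by $C u^{k+\rho-1}e^{-(\mu_a-\varepsilon)u}$, integrable). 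To make this rigorous I would establish the asymptotic lower bound by restricting the integral to a fixed finite window $[0,T]$ and letting $s\to\infty$ then $T\to\infty$, and the upper bound by combining the finite-window estimate with the tail bound on $[T,\sigma_s b_\varepsilon]$ coming from exponential decay; letting $\varepsilon\downarrow 0$ at the end removes the $\varepsilon$-dependence of the constant. Combining, $k! = \big(sp\rho/\sigma_s^{\rho}\big)\big(\mu_a^{-\rho}\Gamma(k+\rho) + o(1)\big) + O(s e^{-c\sigma_s})$; since the main term must stay bounded away from $0$ and $\infty$, the error term is forced to be negligible, and rearranging yields $\sigma_s^{\rho}/s \to p\rho\,\Gamma(k+\rho)/(k!\,\mu_a^{\rho})$.

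The main obstacle is the interchange of limit and integral in the presence of the moving upper limit $\sigma_s b_\varepsilon$ and the $s$-dependent integrand $\sigma_s h(u/\sigma_s)$: I need a dominating function uniform in $s$, which the two-sided linear bound on $h$ near $0$ supplies on the relevant range, but some care is needed for the part of the integral where $u/\sigma_s$ exceeds $b_\varepsilon$ — that contribution must be shown to be exponentially small, which requires knowing $\sigma_s \to \infty$ fast enough relative to nothing at all, i.e.\ only that $\sigma_s\to\infty$, so that $\sigma_s b_\varepsilon \to \infty$ and the tail $\int_{\sigma_s b_\varepsilon}^{\infty}$ of the limiting integrand vanishes. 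A secondary technical point is confirming that $v_s$ (hence $\sigma_s$) is eventually uniquely determined and that "largest solution" does not introduce a spurious branch; this follows because the right-hand side of~\eqref{eq:scaling}, as a function of $\sigma_s$, is eventually monotone for large $\sigma_s$ (the integral expression above is eventually decreasing in $\sigma_s$), so the largest solution is the one captured by the asymptotics.
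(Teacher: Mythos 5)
Your proposal is correct and takes essentially the same route as the paper's proof: discard the weights bounded away from $0$ (exponentially small contribution), use $h(w)/w\to\mu_a$ from Lemma~\ref{lem:estimates} together with the density $p\rho w^{\rho-1}$, substitute $u=\sigma_s w$, and identify the limit $\mu_a^{-\rho}\Gamma(k+\rho)$ by dominated convergence. The only real difference is bookkeeping: the paper rewrites~\eqref{eq:scaling} as $\sigma_s^{\rho}/s=(\sigma_s^{\rho}/k!)\,\E[(\sigma_s h(W))^k e^{-\sigma_s h(W)}]$, so the large-weight remainder is of order $\sigma_s^{\rho+k}e^{-c\sigma_s}$ and vanishes using only $\sigma_s\to\infty$, which sidesteps the $O(s e^{-c\sigma_s})$ term and the (correct, easily completed) bootstrap you invoke to dispose of it.
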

%
%PRF EQUAL LIMIT
%
\bep
Let $0 < \e < b$.
Then, using~\eqref{eq:scaling}, we have that
\[ \begin{aligned}
\lim \limits_{s \tff} \f{\s_s^\r}s &= \lim \limits_{s \tff} \f{\s_s^\r}{k!} \, \E \big[ \big( \s_s h(W) \big)^k \exp \big(- \s_s h(W) \big) \big] \\
&= \lim \limits_{s \tff} \f{\s_s^\r}{k!} \big( \E \big[ \big( \s_s h(W) \big)^k \exp \big(- \s_s h(W) \big) \, \one \big\{ W \le \e \big\} \big] \big. \\
&\hspace{1.5cm} + \big. \E \big[ \big( \s_s h(W) \big)^k \exp \big(- \s_s h(W) \big) \, \one \big\{ W > \e \big\} \big] \big),
\end{aligned} \]
where the second term decays exponentially fast in $\s_s$ since $h(W) > 0$ whenever $W > \e$.
Thus, we keep only the case when $W \le \e$, which leads to
\[ \lim \limits_{s \tff} \f{\s_s^\r}s = \lim \limits_{s \tff} \f{\s_s^\r}{k!} \E \big[ \big( \s_s h(W) \big)^k \, \exp \big(- \s_s h(W) \big) \, \one \big\{ W \le \e \big\} \big]. \]
Note that, for $w\le b$, the weight distribution has the density $f(w) = p\r w^{\r - 1}$.
Since $\e \downarrow 0$, we can use that $\lim_{w \downarrow 0} h(w) / w = \mu_a$.
We use Lemma~\ref{lem:estimates} to see that 
\[ \lim \limits_{s \tff} \f{\s_s^\r}s = \lim_{\e \downarrow 0} \lim \limits_{s \tff} \f{\s_s^{\r+k} p \r\mu_a^k}{k!} \int_0^{\e} w^{k + \r - 1} \exp \big( - \s_s \mu_a \, w(1+o_\e(1)) \big) \, \d w. \]
Finally, substituting $u := \s_s \mu_a w$ we have that
\[ \begin{aligned} \lim \limits_{s \tff} \f{\s_s^\r}s &= \lim_{\e \downarrow 0} \lim \limits_{s \tff} \f{p\r}{k!\mu_a^\r} \int_0^{\s_s\e\mu_a} u^{k + \r - 1} \exp \big( - u(1 + o_\e(1) \big) \, \d u 
= \f{p \, \r}{k! \mu_a^\r} \Ga \big( k + \r \big),
\end{aligned} \]
as asserted.
\enp

After having shown  Lemma~\ref{lemma:equal_limit_L}, we verify Assumptions~\ref{as:1}--~\ref{as:3}.
First, we deal with Assumption~\ref{as:1}.
As $w_s = (2 p s)^{-1/\r}$, we have that
\[ \begin{aligned}
 \liminf_{s \tff} \f{\s_s(2 p s)^{-\eta/\r}}{\log(s)} &= \liminf_{s \tff} \f{(2 p s)^{-\eta/\r}}{\log(s)} \left( \f{s p\r}{k!\mu_a^\r}\Ga \left( k + \r \right) \right)^{1/\r}\\ 
&=\liminf_{s\tff}\left( \f{p^{1-\eta} \r \Ga \left( k + \r \right)}{2^{\eta}k!\mu_a^\r} \right)^{1/\r} \f{s^{(1-\eta)/\r}}{\log(s)} = \ff,
\end{aligned} \]
where in the second step we used Lemma~\ref{lemma:equal_limit_L}.

Now, we choose $K := 2/ \delta$ and subsequently $\eta$ sufficiently close to $1$ such that $(K+1)(1-\eta)/\rho < \eta$, or, equivalently,
\[ \f{(1 + K)/\r}{1 + (1 + K)/\r} < \eta < 1. \]
We now turn our attention to Assumption~\ref{as:2}.
Note that $ F(w_s^\eta) = p \, w_s^{\eta \, \r} =p/(2 p s)^{\eta} $.
Therefore, 
\[ \begin{aligned}
\limsup_{s \tff} \log(s) \, w_s^{- \left( K + 1 \right) \left( 1 - \eta \right)} \, F(w_s^\eta) &= \limsup_{s \tff} \log(s) \, (2 p s)^{{\left( K + 1 \right) \left( 1 - \eta \right)}/{\r }} \, \f{p}{(2 p s)^{\eta}} \\
&= p \, \limsup_{s \tff} \log(s) \, (2 p s)^{{\left( K + 1 \right) \left( 1 - \eta \right)}/\r - \eta} = 0,
\end{aligned} \]
where we used that the exponent is negative for the chosen $\eta$.
Thus, Assumption~\ref{as:2} is satisfied.

Finally, for Assumption~\ref{as:3} we have that
\[ \limsup_{s \tff} \log(s) \, w_s^{(K \delta - 1)(1 - \eta)} = \limsup_{s \tff} \log(s) (2 s p)^{- {(K \delta - 1)(1 - \eta)}/\r} = 0, \]
as the exponent is negative by the choice of $K$.

%
%EX EXP
%
\subsection{Stretched exponential left tails}
\label{ex:exp}

Now, we consider stretched-exponential left tails.
That is,
\[ F(w) := p\exp(-w^{-\r}) \quad \text{ for } p, \r > 0 \text{ and for all }w \le b\text{ for some }b > 0. \]
Note that the $1/(2s)$-quantile $w_s$ of the weight distribution is given by
\[ \P(W \le w_s) = 1/(2s) \quad\Longleftrightarrow\quad p\exp \left( - w_s^{-\r} \right) =1/(2s) \quad\Longleftrightarrow\quad w_s = \log \left( 2 p s \right)^{-1/\r}. \]
The main work of this subsection is to identify the $\s_s$-scaling.
\begin{lemma}[$\s_s$-scaling for stretched exponential left tails]
Let $v_s$ be as defined in~\eqref{eq:scaling}.
Then for $k \ge 0$,
\begin{equation}
      0 < \liminf_{s \tff}  \f{\s_s}{\log(s)^{1+1/\r}}  \le \limsup \limits_{s \tff}  \f{\s_s}{\log(s)^{1+1/\r}}  < \ff.
      \lab{eq:exp_example_limit}
\end{equation} \label{lemma:exp_example_limit}
\end{lemma}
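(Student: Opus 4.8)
The plan is to extract the $\sigma_s$-scaling from the characterizing equation~\eqref{eq:scaling} by the same kind of Laplace/localization argument used in the polynomial case, but now tracking the exponential weight of the small-weight region. First I would use Lemma~\ref{lem:estimates}(4), which already gives $\sigma_s w_s\in O(\log s)$, together with the stretched-exponential form of $F$, to guess that $\sigma_s$ should be of order $\log(s)^{1+1/\rho}$: indeed $w_s=\log(2ps)^{-1/\rho}$, so $\sigma_s w_s\asymp\log s$ forces $\sigma_s\asymp\log(s)\cdot\log(s)^{1/\rho}$. The task is then to turn this heuristic into matched upper and lower bounds.

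For the \textbf{upper bound} on $\limsup_s \sigma_s/\log(s)^{1+1/\rho}$, I would argue by contradiction: if $\sigma_s$ were too large along a subsequence, then $\sigma_s h(W)$ is large with overwhelming probability (since $h(w)\ge w^a\mu_-(w)$ and, using $\lim_{w\downarrow0}h(w)/w=\mu_a$ from Lemma~\ref{lem:estimates}, $h(w)\gtrsim w$ for small $w$), so $(\sigma_s h(W))^k\exp(-\sigma_s h(W))$ is tiny except on the event that $W$ is extremely small — of order $1/\sigma_s$. Splitting the expectation in~\eqref{eq:scaling} at a threshold proportional to $1/\sigma_s$ and bounding $x^k e^{-x/2}$ by a constant, the contribution of $\{W\gtrsim 1/\sigma_s\}$ is $O(s\exp(-c\sigma_s w))$ and the contribution of $\{W\lesssim 1/\sigma_s\}$ is at most $s F(c/\sigma_s)= sp\exp(-(c/\sigma_s)^{-\rho})=sp\exp(-c^{-\rho}\sigma_s^{\rho})$. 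For the left-hand side of~\eqref{eq:scaling} to equal $k!$, we need $s\exp(-c'\sigma_s^{\rho})$ bounded below, i.e. $\sigma_s^\rho = O(\log s)$, giving $\sigma_s=O(\log(s)^{1/\rho})$ — wait, that is the wrong order, so the correct localization scale for the contradiction must instead be $w$ of order $\log(s)^{1/\rho}/\sigma_s$ rather than $1/\sigma_s$; the right bookkeeping is that the dominant balance in $\E[(\sigma_s h(W))^k e^{-\sigma_s h(W)}]$ comes from $w\approx w_s$, where $F$ has density $f(w)=p\rho w^{-\rho-1}\exp(-w^{-\rho})$, and one does a saddle-point evaluation of $\int \sigma_s^k (\mu_a w)^k e^{-\sigma_s\mu_a w}\,p\rho w^{-\rho-1}e^{-w^{-\rho}}\,dw$. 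The exponent $-\sigma_s\mu_a w - w^{-\rho}$ is maximized at $w_\star\asymp\sigma_s^{-1/(\rho+1)}$ with maximal value of order $-\sigma_s^{\rho/(\rho+1)}$, so the integral is $\exp(-\Theta(\sigma_s^{\rho/(\rho+1)}))$ up to polynomial factors; setting $s$ times this equal to a constant gives $\sigma_s^{\rho/(\rho+1)}\asymp\log s$, i.e. $\sigma_s\asymp\log(s)^{(\rho+1)/\rho}=\log(s)^{1+1/\rho}$, exactly as claimed.

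So the cleanest route is a \textbf{direct Laplace-method estimate}: restrict the expectation in~\eqref{eq:scaling} to $\{W\le\varepsilon\}$ (the complement is exponentially negligible in $\sigma_s$ as in Lemma~\ref{lemma:equal_limit_L}), substitute the explicit density on $(0,b]$, replace $h(w)$ by $\mu_a w(1+o_\varepsilon(1))$, and analyze
$$ \f{s\,p\rho\,\mu_a^k\,\sigma_s^k}{k!}\int_0^{\varepsilon} w^{k-\rho-1}\exp\!\big(-\sigma_s\mu_a w(1+o_\varepsilon(1)) - w^{-\rho}\big)\,\d w = 1 $$
by a standard Laplace/saddle-point expansion around $w_\star=(\rho/(\sigma_s\mu_a))^{1/(\rho+1)}$. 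Taking logarithms and isolating $\sigma_s$ yields $\log s = c\,\sigma_s^{\rho/(\rho+1)}(1+o(1))$ up to lower-order polynomial-in-$\log$ corrections, which gives both the liminf and limsup bounds in~\eqref{eq:exp_example_limit} after dividing by $\log(s)^{1+1/\rho}$. The \textbf{main obstacle} I anticipate is making the Laplace asymptotics rigorous with the two-sided $(1+o_\varepsilon(1))$ error coming from the replacement $h(w)\approx\mu_a w$: one has to choose $\varepsilon$ small (for the $h$-approximation to be accurate) but the saddle $w_\star\to 0$ as $s\to\infty$, so for fixed $\varepsilon$ the saddle eventually lies well inside $(0,\varepsilon)$ and the interplay of the limits $s\to\infty$ then $\varepsilon\downarrow0$ must be handled carefully, exactly as in the proof of Lemma~\ref{lemma:equal_limit_L}. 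A convenient way to avoid a fully explicit saddle-point computation is to prove the two inequalities separately by crude but matching bounds: for the upper bound use $h(w)\ge cw$ and bound the integral above by its value at the optimal $w$; for the lower bound keep only a small window around $w_\star$ on which both the exponential and the polynomial factors are controlled from below. Either way, no new input beyond Lemma~\ref{lem:estimates}, the form of $F$, and elementary calculus is needed.
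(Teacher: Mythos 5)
Your plan is workable and reaches the right conclusion, but by a genuinely different route than the paper. You analyze the right-hand side $s\,\E[(\s h(W))^k e^{-\s h(W)}]$ of~\eqref{eq:scaling} at the solution $\s_s$ itself via a two-sided Laplace estimate: discard $\{W>\e\}$ (exponentially small in $\s_s$), use $h(w)=\mu_a w(1+o_\e(1))$ together with the density $p\r w^{-\r-1}e^{-w^{-\r}}$, locate the saddle $w_\star\asymp\s_s^{-1/(\r+1)}$ (which, reassuringly, has the same order as $w_s=\log(2ps)^{-1/\r}$), and invert the resulting relation $\log s\asymp\s_s^{\r/(\r+1)}$; since the lemma only claims liminf/limsup bounds, your crude fallback (bounding the integral by $\e$ times its supremum for the upper bound, and by a window around $w_\star$ for the lower bound) suffices, polynomial prefactors contributing only $O(\log\s_s)=o(\log s)$ to the exponent. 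The paper avoids saddle-point analysis altogether: it evaluates the same quantity at the explicit test scales $\s_s^\pm=M^{\mp1}\log(s)^{1+1/\r}$, anchors its windows at the quantile $w_s$ (namely $[2/(\s_s^-\mu_a),Kw_s]$ and $[Kw_s,b]$), shows the value exceeds $k!$ at $\s_s^-$ and stays below $k!$ for every $\s\ge\s_s^+$, and concludes with the intermediate-value theorem; this is less precise but delivers, as a byproduct, the existence of the largest solution inside the stated window, which your argument takes for granted (together with $\s_s\tff$ --- both are justified earlier in the section, so this is only a presentational point, since bounds derived from the equation apply to any solution, in particular the largest). If you write your version out, drop the abandoned first attempt (localizing at scale $1/\s_s$ indeed gives the wrong order, as you noticed), and in the upper bound keep the discarded tail term $O(s\,e^{-c\s_s})$ explicitly in the inequality and note that it is dominated by the term $s\,e^{-c'\s_s^{\r/(\r+1)}}$ (or else forces an even smaller $\s_s$), so the conclusion $\s_s\in O(\log(s)^{1+1/\r})$ is unaffected; carried out with full $(1+o(1))$ asymptotics, your method would in fact identify the limit of $\s_s/\log(s)^{1+1/\r}$, which is more than the lemma asserts.
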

Since the proof of Lemma~\ref{lemma:exp_example_limit} is rather technical, we first explain how to verify Assumptions~\ref{as:1}--~\ref{as:3}.
For this, let $\eta = 1/2$ and fix $K > (2 \rho + 1) / \delta$.

First, we examine Assumption~\ref{as:1}.
As $w_s = \log (2 p s)^{- 1/\r}$, using Lemma~\ref{lemma:exp_example_limit}, for some $M > 0$,
\[ \begin{aligned}
\liminf_{s \tff} \f{sv_sw_s^{\eta}}{\log(s)} &= \liminf_{s \tff} \f{\s_s\log (2 p s)^{-\eta/\r}}{\log(s)} \ge \lim_{s \tff} \f1M \, \log (s)^{-\eta/\r - 1} \, \log (s)^{1 +1/\r} = \ff
\end{aligned} \]
where we applied that $\lim_{s\tff} \log(s) / \log(2ps) = 1$.

Let us consider Assumption~\ref{as:2}.
As
\[ F(w_s^\eta) = p\exp \left( - w_s^{- \r \, \eta} \right) = p\exp \big( - \log \left( 2 p s \right)^{1/2} \big), \]
we have that
\[ \begin{aligned}
\limsup_{s \tff} \log(s)w_s^{- \left( K + 1 \right) \left( 1 - \eta \right)}F(w_s^\eta) &= \lim_{s \tff} \log(s) \log \left( 2 p s \right)^{(K + 1)/(2 \r)} p \exp \big( - \log \left( 2 p s \right)^{1/2} \big) \\
&= p \lim_{s \tff} \log(s)^{1 + (K + 1)/(2 \r)} \, \exp \left( - \log(s)^{1/2} \right),
\end{aligned} \]
Noting that $ \log ( \log (s)^{1 + (K + 1) / 2\r} ) \in o( \log(s)^{1 / 2} )$, independently of the value of $K$, we conclude that
\[ \limsup_{s \tff} \log(s) w_s^{- \left( K + 1 \right) \left( 1 - \eta \right)}F(w_s^\eta) = p \limsup_{s \tff} \exp \big( - \log \left( s \right)^{1/2} \big) = 0. \]
Thus, the Assumption~\ref{as:2} holds.

Similarly, we can see that Assumption~\ref{as:3} is satisfied since for our choice of $K$,
\[ \limsup_{s \tff} \log(s) w_s^{\left( K \delta - 1 \right) \left( 1 - \eta \right)} = \lim_{s \tff} \log(s) \log \left( 2 p s \right)^{-(K \delta - 1)/(2\r)} = 0.\]

Now, we prove Lemma~\ref{lemma:exp_example_limit}.
In the proof, we use the Landau notation $f \asymp g$  for $f \in O(g)$ and $g \in O(f)$.

\bep[Proof of Lemma~\ref{lemma:exp_example_limit}]

To prove~\eqref{eq:exp_example_limit} for general $k \ge 0$, we show a lower and upper bound of the value of $\s_s$ from~\eqref{eq:scaling}.
Let us introduce the following notations,
\[ \s_s^- := M^{-1}\log (s)^{1+1/\r} \qquad\text{ and }\qquad \s_s^+ := M\log (s)^{1+1/\r}, \]
for a suitable $M > 0$ chosen below.

Our goal is to show that for some $M > 0$ and all sufficiently large $s$, the largest solution $\s_s$ of~\eqref{eq:scaling} lies in 
$ \s_s^- < \s_s < \s_s^+$.
For a fixed intensity $s$, Figure~\ref{fig:proof_exponential_example} shows the right-hand side
\[ L(s, \s) := s \, \E [ (\s \, h(W))^k \, \exp ( - \s \, h(W))] \]
of~\eqref{eq:scaling} as a function of $\s$ compared to its left hand side $k!$.
\begin{figure}[htb]
  \begin{tikzpicture}
    \begin{axis}[
      xmin=0, xmax=3.25,
      ymin=0, ymax=3.,
      scale only axis=true,
      width=0.75\textwidth,
      height=0.3\textwidth,
      axis lines=middle,
      ytick={0, 1},
      yticklabels={$0$, $k!$},
      xtick={0.001, 1, 1.5, 2},
      xticklabels={$0$, $\s_s^-$, $\s_s$, $\s_s^+$},
      xlabel=$\s_s$,
      ylabel={$L(s, \s)$},
    ]
    \addplot[black, dashed, domain=0:3, samples=2] {1};
    \addplot[black, dashed, samples=2] coordinates {(1, 0) (1, 1)};
    \addplot[black, dashed, samples=2] coordinates {(1.5, 0) (1.5, 1)};
    \addplot[black, dashed, samples=2] coordinates {(2, 0) (2, 2.5)};
    \addplot[black, ultra thick, samples=2] coordinates {(1, 1.) (1, 2.5)};
    \addplot[black, domain=0:3.0, samples=100]{13.5*x*exp(-2*x)};

    \draw[fill=gray!50, opacity=0.5, draw=none] plot[smooth, samples=100] (axis cs: 2,0) -- (axis cs: 2,1) -- (axis cs: 3,1) -- (axis cs: 3,0) -- cycle;
    \draw[fill=white] (axis cs: 1.0, 1) circle (3pt);
    \draw[fill=black] (axis cs: 1.5, 1) circle (2pt);

    \node at (axis cs: 0.65, 1.4) {step 1};
    \draw[black, line width=0.1mm] (axis cs: 0.8, 1.4) -- (axis cs: 1, 1.4);
    \node at (axis cs: 2.5, 1.5) {step 2};
    \draw[black, line width=0.1mm] (axis cs: 2.5, 1.25) -- (axis cs: 2.5, 0.5);
    \node at (axis cs: 1.5, 2.) {goal};
    \draw[black, line width=0.1mm] (axis cs: 1.5, 1.75) -- (axis cs: 1.5, 1.0);

    \end{axis}
  \end{tikzpicture} \caption{
    The right-hand side $L(s, \s)$ of~\eqref{eq:scaling} as a function of $\s$ in three intervals.
    In the first step of the proof, we show that $L(s, \s_s^-) > k!$ for large $s$.
    The second step proves that if $s$ is large and $\s \ge \s_s^+$, then $L(s, \s) < k!$.
    Finally, we use the intermediate-value theorem to show that the largest solution of~\eqref{eq:scaling} must lie in the third interval.
  } \label{fig:proof_exponential_example}
\end{figure}
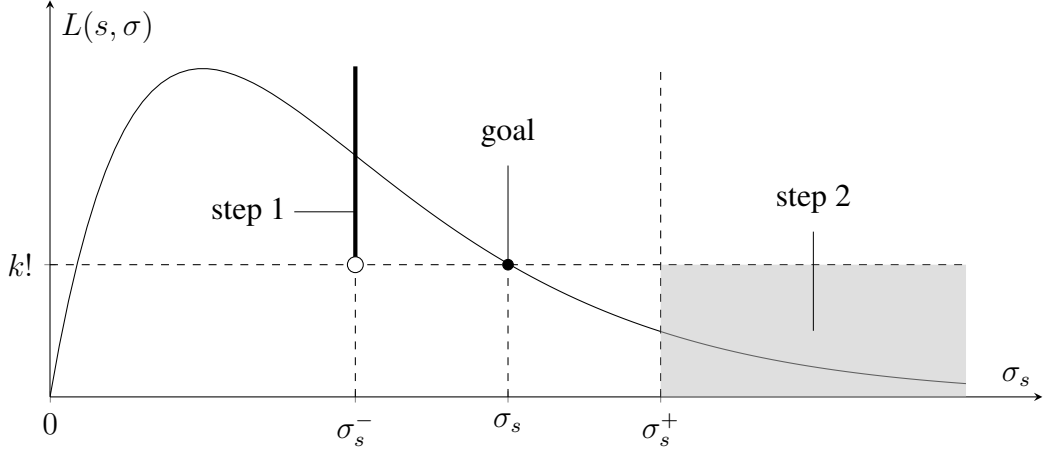

To prove the lemma, we use two steps.
\begin{enumerate}
\item First, we show that $\lim_{s\tff} L(s, \s_s^-) = \ff.$
\item Next, we will see that $\lim_{s\tff} \sup_{\s \ge \s_s^+} L(s, \s) = 0.$
\end{enumerate}
These two steps conclude the proof.
Indeed,  we note that $\lim_{s \tff} L(s, \s_s^-) > k!$ (Step~1), while also $\lim_{s\tff} \sup_{\s \ge \s_s^+} L(s, \s) < k!$ (Step~2).
As $L(s, \s)$ is a continuous function of $\s$, the intermediate-value theorem leads to the conclusion that for large $s$, there is at least one solution of Equation~\eqref{eq:scaling} if $\s_s^- < \s < \s_s^+$.
It also follows from Step 2 that we cannot have a solution if $\s \ge \s_+$.
Thus, the largest solution $\s_s$ must exist and lie in $\s_s^- < \s_s < \s_s^+$, thus proving the lemma.

\medskip
\noindent {\bf Step~1.}
Let us first assume that $\s = \s_s^-$.
Then, for every $K > 0$,
\[ L(s, \s_s^-) = s \, \E \big[ \big( \s_s^- \, h(W) \big)^k \, \mathrm{e}^{ - \s_s^- \, h(W)} \big] \ge s \, \E \big[ \one \big\{ \s_s^- \, h(W) \ge 1, \, W \le K \, w_s \big\} \, \mathrm{e}^{ - \s_s^- \, h(W) } \big]. \]
We choose $K$ and $M$ large enough such that $K^{-\r} + 2M^{-1} K \mu_a < 1$.
Since $w_s \downarrow 0$ as $s \tff$, independently of the chosen $K$ in the indicator function, we use again that $\lim_{w \downarrow 0} h(w) / w = \mu_a$ from part~3 of Lemma~\ref{lem:estimates} to see that
\[ \begin{aligned}
	\lim_{s \tff} L(s, \s_s^-) &\ge \lim_{s \tff} s \, \E \big[ \exp \big( -2 \s_s^- \, W \, \mu_a \big) \, \one \big\{ 2/({\s_s^- \mu_a}) \le W \le K \, w_s \big\} \big] \\
&\ge \lim_{s \tff} s \, \exp \big( -2 \s_s^- \, K \, w_s \, \mu_a \big) \, \P \big( 2/(\s_s^- \mu_a) \le W \le K \, w_s \big).
\end{aligned} \]
We can simplify the above exponential by using the specific form of $\s_s^-$ and $w_s$, which leads to
\[ \exp \big( -2 \s_s^- \, K \, w_s \, \mu_a \big) = \exp \big( - 2{K \, \mu_a} \log(s)^{1 + 1/\r} \, \log(2ps)^{-1/\r}/M \big) = s^{-2 M^{-1}{K \, \mu_a}}, \]
where we also used that $\lim_{s \tff} \log(s) / \log(2ps) = 1$.
We can determine the probability for large $s$ as
\[ \begin{aligned}
\P \big( (\s_s^- \mu_a)^{-1} \le W \le K \, w_s \big) &\ge  \big( \P \big( W \le K \, w_s \big) - \P \big( W \le K \, w_s/2 \big) \big) \\
	&\asymp \big( p \, \big( 2 p s \big)^{-K^{-\r}} - p \, \big( 2 p s \big)^{-(K/2)^{-\r}} \big) \asymp s^{-K^{-\r}}.
\end{aligned} \]
Thus, using these results, we deduce that
\[ \lim_{s \tff} L(s, \s_s^-) \ge \lim_{s \tff}  s^{1 - K^{-\r} - 2M^{-1}K\mu_a } = \ff, \]
since our choice of $K$ ensures $ K^{-\r} + 2M^{-1}K\mu_a < 1$.

\medskip
\noindent {\bf Step 2.}
Here, we have that
\[ \begin{aligned}
\limsup_{s \tff} \sup_{\s \ge \s_s^+} L(s, \s) &= \limsup_{s \tff} \sup_{\s \ge \s_s^+} s \, \E \big[ \big( \s \, h(W) \big)^k \, \exp \big( - \s h(W) \big) \big] \\
&\le \limsup_{s \tff} \sup_{\s \ge \s_s^+} s \, \E \big[ C \, \exp \big( -\s  h(W)/2 \big) \big] \\
	&\le C \, \limsup_{s \tff} s \, \E \big[ \exp \big( -  \s_s^{++} \, h(W) \big) \big],
\end{aligned} \]
where $C := (2k)^k \, \exp(-k)$ and where $\s_s^{++} := \s^+_s/2$.
Now, fixing $K > 0$ with $K^{-\r} \ge 2$ we calculate the upper bound
\[ \lim_{s \tff} s\E \Big[ e^{- \s_s^{++} \, h(W) } \Big] \le \liminf_{s \tff} \Big(s  \E \left[ \exp \left( - \s_s^{++} \, h(W) \right) \, \one \left\{ K \, w_s \le W \le b \right\} \right] + s \P \left( W \not \in  [K \, w_s, b] \right) \Big), \]
where the choice of $K$ implies that the second term is negligible.
Moreover, for $w \in [K \, w_s, b]$, we have $h(w) \ge w\mu_+(b)$ and therefore, since $w_s = \log (2 p s)^{- 1/\r}$, choosing $M > 4/(K\mu_+(b))$ gives that 
\[ \E \left[ \exp \left( - \s_s^{++} \, h(W) \right) \, \one \left\{ K \, w_s \le W \le b \right\} \right] \le \exp \left( - \s_s^{++}K \, w_s \, \mu_+(b) \right) \in o(s^{-2}). \]
We obtain that $\lim_{s \tff} s \E [ e^{- \s_s^{++} \, h(W) } ] = 0$, as asserted.
\enp

\section{Proofs}
\label{sec:proofs}

In order to apply the result~\cite[Theorem~4.1]{BSY202}, it is convenient to express the examined scale-free network via iid marked Poisson point processes.
More precisely, we define 
\begin{equation}
\label{e:defn_varphi}
\xi[\om; v] := \sum_{\bfx \in \om} g_k(\bfx, \om; v)\delta_x
\end{equation}
for locally finite $\om$ in the suitable space $\Omega=\R^d\times [0,\ff)\times [0,1]^{\Z_{\ge 0}}$.
Here $g_k(\bfx, \om; v)$ denotes the indicator that the node $x$ is in $[0,1]^d$ and has degree $k$ in the graph $G(\om; v)$.
We consider $\xi_s=\xi[\wt{\PP}_s;v_s]$ where $\wt{\PP}_s$ is a Poisson point process on $\Omega$ with intensity measure $\K_s(\d \bfx)=s{\rm d} x\otimes \mathbb P^W\otimes \text{Leb}^{\otimes\Z_{\ge 0}}_{[0, 1]}$.
Based on the marks, we draw an edge between any two points $\bfx=(x,W_x,T_x)$ and $\bfy=(y,W_y,T_y)$ with $W_x < W_y$ if $T_y^{(i)}< p_s(x,W_x,y,W_y)$, where $i \ge 0$ is chosen such that $x$ is the $i$-th closest point to $y$ within $\wt{ \PP}_s$.
In words, similarly as in~\cite{penrose}, we encode the randomness associated to the existence of an edge (conditioned on the positions $x,y$ and weights $W_x,W_y$ of the endpoints) into an additional iid marking of the points, where the decision is made by the vertex with larger mark.
The measure $\text{Leb}^{\otimes\Z_{\ge 0}}_{[0,1]}$ then guarantees that any edge in the complete graph has an independent choice, using the fact that with probability one no two points have the same distance.

Now, we can start to break down the proof of our main theorem into two key steps, namely, an approximation step and then the Poisson-convergence proof for the approximating process.
%
%SS MPP
%
%
%SS MRA
%
As mentioned above, to employ~\cite[Theorem 4.1]{BSY202}, we need to control certain bounding terms.
However, due to the long-range correlations in the spatial random network, it is difficult to directly apply this result.
Moreover, they are also not easily expressible in the usual framework of stopping sets from~\cite{BSY202}.
Therefore, we work with suitable truncations in the weight space and the spatial domain.
Note that the degree of $x$ is determined only by edges between $x$ and points with larger marks, and the mark of $x$ is smaller than $c$.
Thus, we consider the truncated point count 
\[ \hat g^c_k(\bfx,\omega; v) := g_k ( \bfx, \om \cap \Omega_{\ge W_x}; v) \one \{W_x < c\}, \]
where $\Omega_{\ge W_x}=\R^d\times[W_x,\ff)\times [0,1]^{\Z_{\ge 0}}$.
To spatially localize the edge count, let us introduce 
\[ \bar g^{V,c}_k(\bfx, \om; v) = \hat g^c_k( \bfx, \om \cap \Omega^V(x); v), \]
where $\Omega^V(x) = B_{(V/\nu_d)^{1/d}}(x) \times [0, \ff) \times [0,1]^{\Z}$.
Here $\nu_d = |B_1(o)|$ and hence, the ball $B_{(V / \nu_d)^{1 / d}} (x)$ with radius $V / \nu_d$ centered at $x$ has volume $V$.
We write $\Omega_{\ge a}^V(x) = \Omega^V(x) \cap \Omega_{\ge a}$
%
%SS PA
%
and  consider the random variable 
\[ \bar\xi_s^{V_o,w} := \bar\xi_s^{V_o,w} [\wt{\PP}_s; v_s] := \sum_{\bfx \in \wt{\PP}_s} \bar g^{v_s W_x V_o, w}_k (\bfx, \wt{\PP}_s; v_s) \delta_x, \]
where, we fix the cut-off $V_o(s) = w_s^{K(\eta-1)}$ for $\eta \in (0, 1)$ and $K>0$ satisfying the Assumptions~\ref{as:1}--\ref{as:3}.
%
%PR NEG
%
The proof of Theorem~\ref{thm:apx} is a direct consequence of the following two statements.
\begin{prop}[Truncations are negligible] \label{pr:neg}
We have that $\limsup_{s\tff}d_{\rm KR}(\xi_s,\bar\xi^{V_o(s),w_s^\eta}_s)=0$.
\end{prop}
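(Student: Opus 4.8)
The plan is to bound $d_{\rm KR}(\xi_s,\bar\xi^{V_o(s),w_s^\eta}_s)$ by the expected number of points where the two processes disagree, using that the Kantorovich--Rubinstein distance between point processes is dominated by the expected total-variation distance between the configurations, which in turn is at most $\E[\sum_{\bfx\in\wt\PP_s}|g_k(\bfx,\wt\PP_s;v_s)-\bar g^{v_sW_xV_o,w_s^\eta}_k(\bfx,\wt\PP_s;v_s)|]$. By the Mecke equation this equals $s\int \E[|g_k(\bfx,\wt\PP_s\cup\{\bfx\};v_s)-\bar g^{\cdots}_k(\bfx,\wt\PP_s\cup\{\bfx\};v_s)|]\,\K_s(\d\bfx)$. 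The two indicators differ only on the event that $x\in[0,1]^d$ and either (i) $W_x\ge w_s^\eta$, or (ii) $W_x<w_s^\eta$ but removing the points outside $\Omega^{v_sW_xV_o}(x)$, or the points of weight at least... wait, $\hat g$ already only looks at larger-weight points, so the discrepancy event is: $W_x\ge w_s^\eta$, OR ($W_x<w_s^\eta$ and the spatial truncation changes the degree), i.e. there is a point of $\wt\PP_s$ connected to $x$ that lies outside the ball of volume $v_sW_xV_o(s)$.

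So I would split the bound into two terms. \textbf{Term A:} $s\cdot\P(W\ge w_s^\eta,\ x\in[0,1]^d)\le s(1-F(w_s^\eta))$ — but this is far too large; actually the discrepancy requires $\deg(x)=k$ in \emph{one} of the graphs, so we get an extra small factor. More carefully, on $\{W_x\ge w_s^\eta\}$ the point contributes to $\xi_s$ only if $\deg(x)=k$; the probability of this, integrated against $s\,\d x\otimes\P^W$ restricted to $W\ge w_s^\eta$, is controlled by the expected number of degree-$k$ nodes of weight $\ge w_s^\eta$, which one estimates (as in the proof of Lemma~\ref{theorem:Ex_poisson_conv}) by $s\E[\tfrac1{k!}(\sigma_s h(W))^k\exp(-\sigma_s h(W))\one\{W\ge w_s^\eta\}]$; using $h(W)\ge W\mu_+(b)\gtrsim w_s^\eta$ on the relevant range together with $\sup_{x\ge0}x^k e^{-x/2}<\ff$ and then $\sigma_s w_s^\eta\gg\log s$ from Assumption~\ref{as:1}, this is $o(1)$. \textbf{Term B:} for $W_x<w_s^\eta$, the spatial truncation fails only if $x$ has a neighbor outside $B_{(v_sW_xV_o/\nu_d)^{1/d}}(x)$; conditionally on $W_x=w$ the expected number of such neighbors is $s\int_{|y-x|^d\nu_d> v_swV_o}\E[\vp(|B_{|x-y|}(o)|/(v_s\kappa(w,W_y)))]\,\d y\,\P^W(\d W_y)$, and by the substitution $u=|B_{|x-y|}(o)|/(v_s\kappa(w,W_y))$ together with regular variation of $\vp$ with index $\alpha>1$ this is of order $s v_s\kappa(w,\cdot)$-moments times $(V_o)^{-(\alpha-1)}=(V_o)^{-2\delta}$, i.e. $\lesssim \sigma_s h(w) w_s^{2K\delta(1-\eta)}$-type bound; after integrating against $\P^W$ restricted to $\{W<w_s^\eta\}$ and weighting by the probability $\lesssim \sigma_s h(w)$-free factors, one is left with something controlled by Assumptions~\ref{as:2} and~\ref{as:3}. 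One must also handle the event that the \emph{weight} truncation inside $\hat g$ plays no role — it does not, since $\hat g$ uses only points of weight $\ge W_x$, matching $g_k$ restricted to the edges that determine $\deg(x)$; the only subtlety is that $g_k$ counts \emph{all} neighbors while edges to smaller-weight points are decided by those points, but by the symmetric construction the degree is still a function of $\wt\PP_s\cap\Omega_{\ge W_x}$ together with marks — actually no, smaller-weight neighbors do contribute to $\deg(x)$, so I need to be careful: $\hat g^c_k$ replaces $\deg(x)$ by the number of \emph{larger-weight} neighbors. This means Term A must additionally absorb the discrepancy coming from small-weight neighbors; the fix is that a point of weight $<w_s^\eta$ having a neighbor of even smaller weight is even rarer, and is again controlled by the same moment computation since $\kappa(w_1,w_2)\le w_1(w_1\vee w_2)^a$ is small when both weights are small.

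Concretely I would organize the proof as: (1) reduce $d_{\rm KR}$ to the expected symmetric-difference count via the standard coupling/Mecke bound; (2) write the discrepancy event as the union of the three sub-events (large weight of $x$; a small-weight neighbor of $x$; a far-away neighbor of $x$) and bound its contribution by a sum of three expectations; (3) estimate each expectation by a change of variables exploiting regular variation of $\vp$ and of $1-F$, reducing everything to moments $\mu_{a\alpha}<\ff$ and to the quantities appearing in Assumptions~\ref{as:1}--\ref{as:3}; (4) conclude that each term is $o(1)$ as $s\tff$. The main obstacle I anticipate is step (3): getting clean tail bounds on the expected number of far-away or small-weight neighbors uniformly enough to plug into the assumptions, because the connection probability $\vp(|B_{|x-y|}(o)|/(v_s\kappa(W_x,W_y)))$ couples the spatial and weight variables in a way that requires splitting the $W_y$-integral at scales comparable to $w_s$ and carefully tracking how the profile's regular variation index $\alpha$ turns the volume cut-off $V_o(s)=w_s^{K(\eta-1)}$ into the power $w_s^{K\delta(1-\eta)}$ that Assumption~\ref{as:3} is tailored to kill; the large-weight ($W_y$ heavy tail) part additionally needs $\beta>a\alpha$ to ensure the relevant weight moments are finite.
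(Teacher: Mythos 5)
Your proposal follows essentially the same route as the paper's proof: bound $d_{\rm KR}$ by the expected symmetric-difference count via Markov and the Mecke formula, split the discrepancy at a point $x$ into the events of large weight $W_x\ge w_s^\eta$, of a smaller-weight neighbor, and of a neighbor outside the ball of volume $v_sW_xV_o(s)$, compensate the factor $s$ by the degree-$k$ probability through the scaling equation~\eqref{eq:scaling}, and kill the remaining factors by regular variation (Karamata) together with Assumptions~\ref{as:1}--\ref{as:3}, which is exactly the paper's mark- and reach-approximation scheme. Two small repairs are needed in execution but stay within your outline: for $W_x\ge b$ the bound $h(W_x)\ge W_x\mu_+(b)$ is false and one must use $h(W_x)\ge W_x^a\mu_-(b)$ instead (the paper's separate large-mark step), and in the discrepancy direction where $\deg(x)=k$ is evaluated in the untruncated graph the degree-$k$ event and the extra-neighbor event do not factorize by independence, so one needs the binomial decomposition with the ratio $W_o\mu_+(W_o)/h(W_o)\to 1$ (likewise, the far-neighbor tail is best taken as $V_o^{-\delta}$ rather than $V_o^{-(\alpha-1)}$ to absorb slowly varying factors, which is harmless since Assumption~\ref{as:3} is tailored to the weaker exponent).
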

%
%PR QA
%
\begin{prop}[Poisson approximation] \label{pr:qa}
We have that $\limsup_{s\tff}d_{\rm KR}(\bar\xi^{V_o(s),w_s^\eta}_s,\zeta)=0$.
\end{prop}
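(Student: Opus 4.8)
The plan is to apply the functional Poisson-approximation result \cite[Theorem 4.1]{BSY202} to the truncated process $\bar\xi_s^{V_o(s),w_s^\eta}$. That theorem bounds $d_{\rm KR}(\bar\xi_s,\zeta)$ by a sum of several terms built from the Palm expectations of the score functions, the ``stabilization''/localization radii of the scores, and the discrepancy between the mean measure of $\bar\xi_s$ and $\mathrm{Leb}$. Thus the first step is to record that, because of the spatial truncation built into $\bar g^{v_sW_xV_o,w}_k$, the score at a point $\bfx$ depends only on the configuration inside the ball of volume $v_sW_xV_o(s)$ around $x$; this gives an explicit, deterministic (given $W_x$) localization radius, which is precisely what makes \cite{BSY202} applicable here without the stopping-set machinery failing due to long-range dependence. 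Then I would verify that $V_o(s)=w_s^{K(\eta-1)}$ together with Lemma~\ref{lem:estimates}(4) (so that $\sigma_s w_s\in O(\log s)$) and Assumption~\ref{as:1} forces these localization volumes $v_sW_xV_o(s)$ to be small relative to the unit cube, with the expected number of points inside them controlled — this is what ultimately makes the ``overlap''/second-moment terms vanish.

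The main quantitative input is an asymptotic for the Palm probability that the origin, carrying weight $W$, has truncated degree exactly $k$ in its localized neighborhood. Here I would argue that, conditionally on $W_x=w$ small, the number of points of $\wt\PP_s$ connected to $x$ within the truncation ball is approximately Poisson with parameter $\sigma_s h(w)$ (using the form of $p_s$, the normalization $\int_0^\infty\vp=1$, and $\kappa(w,\cdot)$), so that the Palm probability of degree $k$ is close to $\frac1{k!}(\sigma_sh(W))^k\exp(-\sigma_sh(W))$. Integrating against $s\,\mathbb P^W$ and invoking the defining Equation~\eqref{eq:scaling} shows the mean measure of $\bar\xi_s^{V_o(s),w_s^\eta}$ has total mass converging to $1$; combined with translation covariance on $[0,1]^d$ (boundary effects being negligible), this yields $d_{\rm KR}(\E[\bar\xi_s^{V_o(s),w_s^\eta}(\cdot)],\mathrm{Leb})\to0$. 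One subtlety is that $\bar\xi_s$ carries the additional weight-truncation $\one\{W_x<w_s^\eta\}$; here Assumption~\ref{as:2} is exactly what is needed to show the contribution of weights in $[w_s^\eta,c)$ to the degree-$k$ count is asymptotically negligible, so restricting to $W_x<w_s^\eta$ does not distort the mean measure.

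Next I would bound the remaining terms in \cite[Theorem 4.1]{BSY202}, which measure the cost of a point's score being influenced by the insertion of one or two extra points. Because scores are already localized, these reduce to estimating (i) the probability that two Palm points at distance compatible with their truncation balls both contribute, which is of order $s^2$ times the squared volume of a typical truncation ball times the product of the two degree-$k$ Palm probabilities, and (ii) the probability that adding a point changes a degree-$k$ indicator — i.e. that $x$ has degree $k-1$ or $k$ and the new point falls in range. Both are controlled by the smallness of $\sigma_s v_s W_x V_o(s)$-type quantities; Assumption~\ref{as:3}, through the regular variation of $\vp$ with index $\alpha>1$ and $\delta=(\alpha-1)/2$, is what guarantees that the profile-function tails entering the connection probabilities on the truncated scale contribute a vanishing amount, so these ``add-one-point'' terms tend to $0$. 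Assembling (mean measure term)$\;+\;$(add-one-point terms)$\;+\;$(add-two-point/overlap terms)$\;\to\;0$ then gives the claim via \cite[Theorem 4.1]{BSY202}.

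The hard part will be the Poissonization of the localized degree, i.e. making rigorous the claim that the truncated neighborhood degree of a weight-$w$ point is close in distribution to $\mathrm{Poi}(\sigma_sh(w))$ uniformly enough in $w<w_s^\eta$ to integrate against $s\,\mathbb P^W$, and simultaneously controlling the error between the \emph{localized} degree (inside the volume-$v_sW_xV_o(s)$ ball) and the \emph{true} truncated degree $\hat g_k^c$ — this last discrepancy is really the content of Proposition~\ref{pr:neg}, but a residual version of it reappears here when one checks that the mean measure is unchanged. Managing the interplay between the three truncation parameters ($c$, $w_s^\eta$, $V_o(s)$) and the two scalings ($\sigma_s=sv_s$ and $w_s$) so that every error term in \cite{BSY202} is simultaneously negligible, using exactly the three Assumptions, is where the bookkeeping is delicate; the regular variation of $\vp$ and of $1-F$ (with $\beta>a\alpha$, so $\mu_{a\alpha}<\infty$) are the structural facts that keep all the integrals finite.
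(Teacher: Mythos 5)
Your proposal follows essentially the same route as the paper: apply \cite[Theorem 4.1]{BSY202} with the deterministic localization balls $S^{V_o(s)}(\bfx)=B_{(v_sW_xV_o(s)/\nu_d)^{1/d}}(x)$ (so the refined-localization term vanishes), prove convergence of the intensity measure of $\bar\xi^{V_o(s),w_s^\eta}_s$ to Lebesgue by comparing with the exact identity $\E[D_s]=1$ and reusing the truncation errors of Proposition~\ref{pr:neg}, and kill the remaining overlap terms by volume estimates for the truncation balls. The only bookkeeping corrections: in the paper the two overlap terms $E_2,E_3$ are bounded by $s^2$ times a \emph{single} truncation-ball volume $v_sw_s^\eta V_o(s)$ (not its square) times Palm probabilities and vanish via Assumption~\ref{as:2} together with $\sigma_s w_s\in O(\log s)$, while the regular-variation/$\delta$ input and Assumption~\ref{as:3} enter only through the reach-approximation error reused in the intensity-measure step, not through $E_2,E_3$ themselves.
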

Before presenting the proofs of Propositions~\ref{pr:neg} and~\ref{pr:qa} in Sections~\ref{ss:neg} and~\ref{sec:qa}, respectively, let us collect some supporting results that will be used multiple times later.

%
%SS DEG
%
\subsection{Supporting results} \label{ss:deg}

A key property of the considered model is that the expected typical degree conditioned on the typical weight $W_o$ can be expressed in closed form.
This is the content of the following auxiliary result from~\cite[Lemma 4.1]{Deprez2018}.
To make our presentation self-contained, we reproduce here the short proof.
We define an $f$-weighted degree of a marked vertex $\bfx$ as
\[ \ms{deg}_f(\bfx) := \sum_{\bfy \in \wt{\PP}_s \colon y\leftrightarrow x}f(|B_{|x-y|}(o)|, W_x, W_y). \]
In particular, by choosing $f$ as a suitable indicator, we can filter only those neighbors of $x$ satisfying a desired property.
To ease notation, we set
\[ f^*(w_o, w) := \int_{0}^\ff f(uv_s\kappa(w_o,w), w_o, w)\vp(u)\d u,\quad w_o, w > 0. \]

%
%LEM DEG
\bel[Expected typical degree]
\label{lem:deg}It holds that 
\begin{align*}
	\E[\ms{deg}_f(\bfo)|W_o] &= 
sv_sW_o \E\big[\one\{W \ge W_o\}W^a f^*(W_o, W) \mid W_o\big] \\
	&\phantom=+ sv_sW_o^a \E\big[\one\{W \le W_o\}W f^*(W_o, W)\mid W_o\big].
\end{align*}
\enl
Before proving Lemma~\ref{lem:deg}, we discuss how to simplify it for specific choices of $f$.
Recall that
\begin{equation*}
	h(W_o)=W_o\mu_+(W_o)+W_o^a\mu_-(W_o)= W_o\E\big[\one\{W \ge W_o\}W^a \mid W_o \big] + W_o^a\E\big[\one\{W < W_o\}W \mid W_o \big].
\end{equation*}
\bee[Degree of a typical vertex]
\label{ee:dego}
For $f \equiv 1$, we have that $f^*(w_o, w) = 1$ and hence,
\begin{align*}
	\E[\ms{deg}_f(\bfo)\mid W_o] &= 
sv_sW_o \E\big[\one\{W \ge W_o\}W^a \mid W_o \big] + sv_sW_o^a \E\big[\one\{W \le W_o\}W \mid W_o\big]\\
	&=sv_s\E[\k(W_o, W)\mid W_o] = sv_sW_o\mu_+(W_o) + sv_sW_o^a\mu_-(W_o) 
\end{align*}
\ene

\bee[Out-degree of a typical vertex]
For $f(u, w_o, w) = \one \{w \ge w_o \}$, we have that $f^*(w_o, w) = \one \{ w \ge w_o \}$ and hence,
\[ \E[\ms{deg}_f(\bfo)\mid W_o] = s v_s h(W_o) = s v_s W_o \mu_+(W_o). \]
\ene

\bee[Finite-range truncation] \label{eq:dego1}
For $f(u, w_o, w) = \one\{ u \ge V_o v_s w_o, w \ge w_o\}$, we have that $f^*(w_o, w) = \one\{w\ge w_o\}\int_{V_o/W^a}\vp(u)\d u$ and hence,
\[ \E[\ms{deg}_f(\bfo)\mid W_o] = s v_s W_o \E \Big[ \one \{W \ge W_o\} W^a \int_{V_o/W^a}^\ff \vp(u) \d u \Big| W_o \Big]. \]
\ene
We further bound the expression in Application~\ref{eq:dego1} using that $\vp$ and $f_W$ are assumed to be regularly varying with suitable indices.
\begin{lemma} \label{lem:regvar}
\[ \E \Big[ \one \{W \ge W_o\} W^a \int_{V_o/W^a}^\ff \vp(u) \d u \Big| W_o \Big] \in O( V_o^{(1-\a)/2}). \]
\end{lemma}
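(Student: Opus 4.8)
The plan is to estimate the inner integral $\int_{V_o/W^a}^\infty \vp(u)\,\d u$ using the regular variation of $\vp$ at infinity with index $\a > 1$, and then to control the remaining $W$-expectation using the moment assumption $\mu_{a\a} < \ff$ (which holds because $1-F$ is regularly varying with index $\be > a\a$). Recall that $\vp$ regularly varying with tail index $\a>1$ means that its tail integral $\Phi(t) := \int_t^\infty \vp(u)\,\d u$ is regularly varying with index $-(\a-1) = -2\de < 0$; more precisely, by Karamata's theorem, $\Phi(t) \asymp t\,\vp(t)/(\a-1)$ up to a slowly varying factor, and in particular for every small $\ep>0$ there is $C_\ep$ with $\Phi(t) \le C_\ep\, t^{-(\a-1)+\ep}$ for all $t \ge 1$ (and $\Phi(t)\le\Phi(0)=1$ for $t<1$). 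First I would substitute $t = V_o/W^a$: on the event $\{W\ge W_o\}$ we have $t$ possibly smaller or larger than $1$, so I split accordingly. On $\{W^a \le V_o\}$ we use $\Phi(V_o/W^a)\le C_\ep (V_o/W^a)^{-(\a-1)+\ep} = C_\ep V_o^{-(\a-1)+\ep} W^{a(\a-1)-a\ep}$, while on $\{W^a > V_o\}$ we simply bound $\Phi \le 1$.

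Carrying this out, the contribution of the first regime is
\[
\E\big[\one\{W_o \le W,\ W^a \le V_o\}\, W^a\, \Phi(V_o/W^a)\big]
\le C_\ep\, V_o^{-(\a-1)+\ep}\, \E\big[W^{a\a - a\ep}\big]
= C_\ep\, V_o^{-(\a-1)+\ep}\,\mu_{a(\a-\ep)},
\]
which is finite for $\ep$ small since $\mu_{a\a}<\ff$ (and $\mu_r$ is decreasing-continuous in $r$ near $a\a$ in the sense that $\mu_{a(\a-\ep)}\le\max(1,\mu_{a\a})$). The contribution of the second regime is bounded by $\E[\one\{W^a > V_o\}W^a] = \E[\one\{W > V_o^{1/a}\}W^a]$, which by Markov's inequality applied to $W^{a\a}$ is at most $V_o^{-(\a-1)}\,\mu_{a\a}$ (write $\one\{W>V_o^{1/a}\}W^a \le W^a (W^a/V_o)^{\a-1} = V_o^{-(\a-1)} W^{a\a}$). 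Both terms are thus $O(V_o^{-(\a-1)+\ep}) = O(V_o^{(1-\a)/2 + (\ep - (\a-1)/2)})$; choosing $\ep < (\a-1)/2 = \de$ gives a bound that is $o(V_o^{(1-\a)/2})$, hence certainly $O(V_o^{(1-\a)/2})$, which is the claim. (Alternatively, one can be less wasteful and get the sharp exponent $V_o^{-(\a-1)} = V_o^{1-\a}$, but since the statement only asks for $O(V_o^{(1-\a)/2})$ there is slack to spare, and using $V_o^{(1-\a)/2}$ rather than $V_o^{1-\a}$ is presumably convenient downstream.)

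The main obstacle is bookkeeping the regular-variation estimate correctly: one must pass from "$\vp$ is regularly varying with tail index $\a$" to a genuine polynomial upper bound on the tail integral $\Phi$ with an exponent strictly between $-(\a-1)$ and $0$, uniformly over the argument. This is exactly Potter's bounds applied to the slowly varying part of $\Phi$ (after Karamata), and one needs to handle the range where the argument $V_o/W^a$ is bounded (where $\Phi$ is just bounded by $1$) separately from where it is large. The interchange with the expectation over $W$ is then routine Tonelli, since everything is nonnegative, and the moment $\mu_{a\a}<\ff$ — guaranteed by the standing assumption that $1-F$ is regularly varying with index $\be > a\a$ — is precisely what makes the $W$-integral converge after the worst-case power $W^{a\a}$ is extracted.
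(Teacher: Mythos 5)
Your proposal is correct, and its skeleton coincides with the paper's: both split the expectation at the point where $W^a$ is comparable to $V_o$, bound the profile-tail integral by a polynomial with a slightly weakened exponent (a Potter-type bound, in the paper via~\cite[Proposition 0.8]{res} applied after Karamata) on the small-weight part, and bound that integral trivially by $1$ on the large-weight part. The genuine difference is how the large-weight term is controlled: the paper estimates $\mu_+\big((V_o/M)^{1/a}\big)$ by exploiting the regular variation of $1-F$ with index $\beta>a\a$ (integration by parts plus Karamata), arriving at $V_o^{1-(\beta-a(\a-1)/2)/a}$, whereas you use the pointwise inequality $\one\{W^a>V_o\}W^a\le V_o^{1-\a}W^{a\a}$, which needs only $\mu_{a\a}<\ff$ and yields the sharper order $V_o^{1-\a}$ for that piece; your route is thus a bit more elementary at this step and also dispenses with the paper's separate treatment of the case $a=0$, since your argument covers it uniformly (the paper's RV-based bound, in exchange, records explicitly how the exponent depends on $\beta$). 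Two harmless cosmetic points: the bound $\mu_{a(\a-\ep)}\le\max(1,\mu_{a\a})$ is indeed valid (Lyapunov's inequality), and working with the tail integral $\Phi$ being regularly varying of index $1-\a$ is equivalent to the paper's bounding of $t\vp(t)$; neither affects correctness.
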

\begin{proof}
We first consider the simple case where $a = 0$.
Then, Karamata's theorem~\cite[Theorem 0.6]{res} implies that for large $V_o$ we have that $\E[\ms{deg}_f(\bfo)\mid W_o] \le c_1v_sW_oV_o \vp(V_o)$, for some constant $c_1>0$.
Again, by the regular variation of $\vp$, for all sufficiently large $V_o$, we have that $\vp(V_o)\le c_2 V_o^{-\a+(\a-1)/2}$.

For $a > 0$, we distinguish between the cases, where $V_o\le M W^a$ and where $V_o> M W^a$, for some large $M>0$.
This allows us to bound the integral with respect to the function $\vp$, and we obtain that 
\[ \E \Big[ W^a \int_{V_o/W^a}^\ff \vp(u) \d u \Big] \le \mu_+((V_o/M)^{1/a}) + c_3 \, V_o \, \E \big[ \one \{ (V_o/M)^{1/a}\ge W \} \, \vp(V_o/W^a) \big], \]
for some constant $c_3 > 0$.
Now, the regular variation of $\vp$ implies that for a suitable $ c_4> 0$ we have 
\[ \int_0^{(V_o/M)^{1/a}} \vp(V_o/w^a) \, \P(\d w) \le c_4 \, V_o^{-\a+\e} \int_0^{(V_o/M)^{1/a}} w^{a\a} \, \P(\d w) \le c_4 \, \mu_{a\a} V_o^{-\a + (\a - 1) / 2}, \]
where we also used that regularly varying functions can be bounded by polynomials with a slightly weaker exponent, see~\cite[Proposition 0.8]{res}.
On the other hand, since $1 - F$ is also regularly varying, for all sufficiently large $V_o$ and for some suitable constants $c_5, c_6 > 0$,
\begin{align*}
\mu_+ ((V_o / M)^{1/a}) &= \int^\ff_{(V_o/M)^{1/a}} w^a \, \P(\d w) \\
&= \f{V_o}{M} (1 - F \big( ({V_o}/{M})^{1/a} \big) + a \int^\ff_{(V_o/M)^{1/a}} r^{a-1} (1-F(r)) \d r \\
&\le c_5 \, V_o^{1 - (\beta - a (\a - 1) / 2) / a},
\end{align*}
where again used~\cite[Theorem 0.6 and Proposition 0.8]{res}.
Now, since $\beta>a\a$ we see that $1-(\beta-a(\a-1)/2)/a<(1-\a)/2$, which finishes the proof.
\end{proof}
After having discussed these specific applications, we now turn to the proof of Lemma~\ref{lem:deg}.
\bep[Proof of Lemma~\ref{lem:deg}]
First, by the Mecke formula~\cite[Theorem 4.1]{LastPenrose}, 
\[ \E[\ms{deg}_f(\bfo)\mid W_o] = s\int_{\R^d} \E \big[ f(|B_{|x|}(o)|,W_o,W_x) p_s(o,W_o; x, W_x) \mid W_o \big] \d x. \]
Now, switching to spherical coordinates, substituting $u := v / (v_s \, \k(W_o, W))$, and applying Fubini's theorem yield
\begin{align*}
	\E[ \ms{deg}_f(\bfo) \mid W_o] &= \int_0^\ff \E \big[f(v, W_o, W) \vp(v / (v_s \k(W_o, W)) \mid W_o\big]\d v \\
	&= v_s \int_{0}^\ff \E \big[ \k(W_o, W) f(u v_s \kappa(W_o, W), W_o, W) \vp(u) \mid W_o\big]\d u. \\
	&= v_s W_o \E \Big[ \one \{ W \ge W_o \} W^a \int_0^\ff f(u v_s W_o W^a, W_o, W) \vp(u) \d u \Big| W_o \Big] \\
	&\phantom=+ v_sW_o^a \E \Big[ \one \{W \le W_o\} W \int_0^\ff f(u v_s W^a_o W, W_o, W) \vp(u) \d u \Big| W_o \Big],
\end{align*}
as asserted.
\enp
As an immediate application of the above discussion, we can prove Lemma~\ref{theorem:Ex_poisson_conv}.
\begin{proof}[Proof of Lemma~\ref{theorem:Ex_poisson_conv}]
By the Mecke formula and reparametrization, we have
\begin{align*}
\E[D_s]&= \E\Big[\sum_{\bfx\in\wt{\PP}_s}\one\{\ms{deg}(\bfx)=k \mbox{ in } G(\wt{\PP}_s, v_s) \}\Big]\\
&= s \P \big( \ms{deg}(\bfo) = k \mbox{ in } G(\wt{\PP}_s, v_s) \big) \\
&= s \E \Big[ \f1{k!}\Big( sv_s h(W_o)\Big)^k e^{-sv_sh(W_o)} \Big| W_o \Big] = 1,
\lab{eq:translation_inv}
\end{align*} 
where we also used that $\ms{deg}(\bfo)$ is a Poisson random variable with parameter
\[ \E [\ms{deg}(\bfo)\mid W_o] = s \int_{\R^d} \E \big[p_s (o, W_o; x, W_x) \mid W_x \big] \d x = s v_s h(W_o). \]
This finishes the proof.
\end{proof}

%
%SS NEG
%
\subsection{Proof of Proposition~\ref{pr:neg}}
\label{ss:neg}
\bep[Proof of Proposition~\ref{pr:neg} Part 1: Mark approximation]
We perform the proof in three steps.

{\bf Step 1.} Before performing the main mark approximation, we neglect the largest marks.
For this, let
\[ \xi^c_s= \sum_{\bfx \in \wt{\PP_s}}g^c_k(\bfx,\wt{\PP_s}; v_s)\delta_x, \]
where $g^c_k(\bfx,\wt{\PP_s}; v_s):=g_k(\bfx,\wt{\PP_s}; v_s)\one\{W_x< c\}$ and $c$ is such that $\mu_-(c)>0$.
Then, using that $\xi_s$ and $\xi^c_s$ are defined on the same probability space, Markov's inequality and the Mecke theorem,
\begin{align*}
d_{\rm KR}(\xi_s,\xi^c_s)&\le \E[d_{\rm TV}(\xi_s, \xi_s^c)]=\E[(\xi_s - \xi_s^c)([0,1]^d) ]
\le s\E \big[ \psi^c(s) \mid W_o \big],
\end{align*}
where $\psi^c(s):=\one\{W_o\ge c,\,\deg(o)= k \text{ in }G(\wt{\PP_s},v_s)\}$.
Then, $s \E [\psi^c(s) \mid W_o]= \E[f^c_s (W_o) \one \{W_o\ge c\} \mid W_o ]$, with,
\[ f^c_s(W_o) := \f s{k!} \big( s v_s h(W_o) \big)^k \exp(- s v_s h(W_o)) \le C_ks\exp(-sv_sh(W_o)/2), \]
where we used that $\sup \{ x^k \exp(-x/2) \colon x \ge 0 \} < \ff$.
Now we can further bound,
\begin{align*}
s\exp(-sv_sh(W_o)/2)\one\{W_o\ge c\}&\le s\exp(-sv_s c^a\mu_-(c)/2)=s\exp\big(-sv_sw^\eta_s c_s/2\big),
\end{align*}
with $c_s=c^a\mu_-(c)/w_s^\eta$.
Note that $\lim_{s\tff} c_s \geq 2$ since $w_s \downarrow 0$ and, invoking Assumption~\ref{as:1}, $s v_s w^\eta_s \ge 2 \log s$ for all sufficiently large $s$.
Hence, for $s$ sufficiently large, 
\begin{align*}
s\exp(-sv_s c^a\mu_-(c)/2)\le s\exp(-2\log s)\to0.
\end{align*}

\medskip
{\bf Step 2.} Following the same initial arguments as in Step 1, we now remove marks $W_o\ge w_s^\eta$.
More precisely, we then have that $d_{\rm KR}( \xi^c_s,\xi^{w_s^\eta}_s)\le \E \big[ f_s(W_o) \one \{w_s^\eta \le W_o < c \} \mid W_o \big]$, where
\begin{align*}
f_s(W_o)&:=\f s{k!}\big(sv_sh(W_o)\big)^k\exp(-sv_sh(W_o))
\le C_ks\exp(-sv_sh(W_o)/2).
\end{align*}
Now, we bound slightly differently.
For large values of $s$,
\[ s \, \exp(-s v_s h(W_o) / 2) \one\{ w_s^\eta \le W_o < c \} \le s \, \exp(-s v_s w^\eta_s \mu_+(c)). \]
Again, using Assumption~\ref{as:1}, the right-hand side tends to zero as $s\tff$.

\medskip
{\bf Step 3:} We now come to the main mark-approximation step.
We may bound, as above, 
\[ d_{\rm KR}(\xi^{w_s^\eta}_s, \hat\xi^{w_s^\eta}_s) \le s \, \E \big[ \psi(s) \mid W_o \big], \]
where 
$\hat\xi^{w_s^\eta}_s= \sum_{x \in \wt{\PP}_s}\hat g^{w_s^\eta}_k(x,\wt{\PP_s}; v_s) \de_x$ and $\psi(s)=\psi_1(s)\one\{W_o< w_s^\eta\}+\psi_2(s)\one\{W_o< w_s^\eta\}$ with
\begin{align*}
\psi_1(s)&:=\one\{\deg(o)=k \text{ in }G(\wt{\PP}_s^{\ge W_o},v_s)\}\one\{\deg(o)>0 \text{ in }G(\wt{\PP}_s^{< W_o},v_s)\}\\
\psi_2(s)&:=\one\{\deg(o)=k \text{ in }G(\wt{\PP}_s,v_s)\}\one\{\deg(o)>0 \text{ in }G(\wt{\PP}_s^{< W_o},v_s)\}.
\end{align*}
Here, $\wt{\PP}_s^{\ge W_o}$ denotes the Poisson point process $\wt{\PP}_s$ restricted to points with marks $\ge W_o$.
In words, $\psi(s)$ bounds the indicators of the two events that $\hat\xi^{w_s^\eta}_s$ contains a point not contained in $\xi^{w_s^\eta}_s$ and vice versa.

Using the fact that $\wt{\PP}_s$ is an independent superposition of $\wt{\PP}_s^{\ge W_o}$ and $\wt{\PP}_s^{< W_o}$, we can use the Mecke formula to write $s \E \big[ \psi_1(s) \one \{W_o< w_s^\eta\} \mid W_o \big] = \E \big[ f^1_s(W_o) \one \{W_o< w_s^\eta \} \mid W_o \big]$, with
\[ f^1_s(W_o) := \f s{k!} \big( s v_s W_o \mu_+(W_o) \big)^k \exp(- s v_s W_o \mu_+(W_o)) \big(1 - \exp(-s v_s W^a_o \mu_-(W_o)) \big). \]
Now, under the event $\{W_o< w_s^\eta\}$, we have that $sv_s W^a_o \mu_-(W_o) \le s v_s w_s^{(a+1)\eta} F(w_s^\eta) \in o(1)$, by Assumption~\ref{as:2}.
Indeed, invoking Lemma~\ref{lem:estimates}, it suffices to argue that $\log(s) w_s^{(a+1) \eta - 1} F(w_s^\eta) \in o(1)$.
But this is the case since $w_s^{(a+1) \eta - 1} \le w_s^{- (1 - \eta)}$.
Hence,
\[ \E [f^1_s(W_o) | W_o] \le \exp(s v_s w_s^{(a+1) \eta} F(w_s^\eta)) \big( 1 - \exp(-s v_s w_s^{(a+1) \eta} \, F(w_s^\eta)) \big) \to 0. \]

For the second term $\psi_2$, we write $s \E \big[ \psi_2(s) \one\{W_o< w_s^\eta\} \mid W_o \big] = \E \big[f^2_s(W_o) \one \{ W_o < w_s^\eta\} \mid W_o \big]$, with
\begin{align*}
f^2_s(W_o) &:= s \sum_{l=1}^k \f1{l!} \big( s v_s W^a_o \mu_-(W_o) \big)^l \exp(-s v_s W^a_o \mu_-(W_o)) \\
&\qquad \f1{(k-l)!} (s v_s W_o \mu_+(W_o))^{k-l} \exp(- s v_s W_o \mu_+(W_o))\\
&=\f s{k!} \big( s v_s h(W_o) \big)^k \exp(- s v_s h(W_o)) \sum_{l=1}^k \binom{k}{l} \left( \f{W^a_o \mu_-(W_o)}{h(W_o)} \right)^l \left( \f{W_o \mu_+(W_o)}{h(W_o)}\right)^{k-l}.
\end{align*}
Again, invoking~\eqref{eq:scaling}, it suffices to show that 
\begin{align*}
\sum_{l=1}^k\binom{k}{l}&\left(\f{W^a_o\mu_-(W_o)}{h(W_o)}\right)^l\left(\f{W_o\mu_+(W_o)}{h(W_o)}\right)^{k-l}= 1-\left(\f{W_o\mu_+(W_o)}{h(W_o)}\right)^{k}
\end{align*}
tends to zero by Part 4 of Lemma~\ref{lem:estimates}.
\enp

\bep[Proof of Proposition~\ref{pr:neg} Part 2: Reach approximation]
We again invoke the Markov inequality and the Slivnyak--Mecke formula to estimate,
\begin{align*}
d_{\rm KR}(\hat\xi^{w_s^\eta}_s, \bar\xi^{V_o(s), w_s^\eta}_s) \le s \E \big[ \psi'(s) \mid W_o \big],
\end{align*}
where $\psi' = \psi'_1 \one \{ W_o < w_s^\eta \} + \psi'_2 \one \{W_o < w_s^\eta\}$ with $E_s := B_{(v_s W_o V_o(s) / \nu_d)^{1/d}}(o)$ and 
\begin{align*}
\psi'_1(s)&:=\one\{\deg(o)=k \text{ in }G(\wt{\PP}^{\ge W_o}_s\cap E_{s},v_s)\}\one\{\deg(o)> 0 \text{ in }G(\wt{\PP}^{\ge W_o}_s\cap E^c_{s},v_s)\} \\
\psi'_2(s)&:=\one\{\deg(o)=k \text{ in }G(\wt{\PP}^{\ge W_o}_s,v_s)\}\one\{\deg(o)> 0 \text{ in }G(\wt{\PP}^{\ge W_o}_s \cap E^c_{s},v_s)\}
\end{align*}
Recall that we set $V_o(s) = w_s^{K(\eta-1)}$ for $\eta \in (0,1)$ and $K > 0$ and hence $\lim_{s \tff} V_o(s) = \infty$.
Note that, conditioned on $W_o$, the Poisson point processes $\wt{\PP}^{\ge W_o}_s\cap E_{s}$ and $\wt{\PP}^{\ge W_o}_s\cap E^c_{s}$ are independent and thus, $s \E \big[ \psi'_1(s) \mid W_o \big] = \E \big[ f'^1_s(W_o) \one\{ W_o < w_s^\eta \} \mid W_o \big]$ with
\begin{align*}
f'^1_s(W_o) &:= \f s{k!} \big( s v_s W_o \bar\mu_+(W_o, V_o(s)) \big)^k \, \exp(-s v_s W_o \bar\mu_+(W_o, V_o(s))) \times \\
&\qquad \big( 1 - \exp(-s v_s W_o \bar\mu_-(W_o, V_o(s)) \big) 
\end{align*}
where we used the notation 
\begin{align*}
\bar\mu_+(W_o, V_o) &:= \E \Big[ \one \{ W \ge W_o \} W^a \int \one\{u \le V_o / W^a\} \vp(u) \d u \mid W_o \Big] \text{ and } \\
\bar\mu_-(W_o, V_o) &:= \E \Big[ \one \{W \ge W_o\} W^a \int \one \{u \ge V_o/W^a\} \vp(u) \d u \mid W_o \Big].
\end{align*}
As before, we use the first part of $f'^1_s$ to compensate for the coefficient $s$ and the second part to achieve the convergence to zero.
With $\de = (\a - 1) / 2$ and noting that $W_o \bar \mu_+(W_o, V_o(s)) \le h(W_o)$ we have
\begin{align*}
f'^1_s(W_o)\le \f s{k!} \big(s v_s h(W_o) \big)^k \, \exp(-s v_s h(W_o)) &\exp \Big( s v_s w_s^\eta\big(c w_s^{K(1 - \eta) \de} + w_s^{a\eta} F(w_s^\eta) \big) \Big) \\
&\times\Big(1-\exp\big(-csv_sw_s^{\eta + K(1 - \eta)\de}\big)\Big),
\end{align*}
where we used that, employing Lemmas~\ref{lem:estimates} and~\ref{lem:regvar},
\[ h(W_o) - W_o \bar\mu_+(W_o, V_o) = W_o \bar\mu_-(W_o, V_o) + W_o^a \mu_-(W_o) \le c W_o V_o^{-\de} + W_o^{a+1} F(W_o). \]
Arguing as in the third step of the mark-approximation proof above, by Assumptions~\ref{as:2}, we have that $\exp(s v_s w_s^{(a+1)\eta} F(w_s^\eta))) \to 1$, and hence, using~\eqref{eq:scaling}, 
\[ \E[f'^1_s(W_o) \mid W_o] \le C \exp \big( - c s v_s w_s^{\eta + K(1 - \eta)\de} \big) \Big( 1 - \exp \big(- c s v_s w_s^{\eta + K(1 - \eta) \de} \big) \Big). \]
In order to see that $sv_sw_s^{\eta + K(1 - \eta)\de} \in o(1)$, we invoke again Part 3 of Lemma~\ref{lem:estimates} and Assumption~\ref{as:3} to see that for some $c' > 0$,
\[ s v_s w_s^{\eta + K(1 - \eta) \de}\le c' \log(s) w_s^{\eta + K(1 - \eta) \de-1} \to 0. \]

\medskip
For $\psi'_2$, we can argue similarly to the second error term in Step 3 of the proof of the mark approximation.
More precisely, we have $s\E \big[ \psi'_2(s) \one\{W_o < w_s^\eta\} \mid W_o \big] = \E \big[ f'^2_s(W_o) \one \{W_o<w_s^\eta\} \mid W_o \big]$ with $f'^2_s(W_o)$ defined as
\begin{align*}
	&s\sum_{l=1}^k\f1{l!}\big(sv_sW_o\bar\mu_-(W_o,V_o(s))\big)^l\exp(-sv_sW_o\bar\mu_-(W_o,V_o(s)))\times\\
	&\f1{(k-l)!}\big(sv_sW_o\bar\mu_+(W_o,V_o(s))\big)^{k-l}\big(1-\exp(-sv_sW_o\bar\mu_+(W_o,V_o(s))\big)\\
	&=\f{s}{k!}\big(sv_sW_o\mu_+(W_o)\big)^ke^{-sv_sW_o\mu_+(W_o)}\sum_{l=1}^k\binom{k}{l}\left(\f{\bar\mu_-(W_o,V_o(s))}{\mu_+(W_o)}\right)^l
\left(\f{\bar\mu_+(W_o,V_o(s))}{\mu_+(W_o)}\right)^{k-l}\\
	&\le \f{C s}{k!}\big(sv_sh(W_o)\big)^ke^{-sv_sh(W_o)}e^{sv_sw^{(a+1)\eta}_oF(w_s^{\eta})}\Big[1-\left(\f{\bar\mu_+(W_o,V_o(s))}{\mu_+(W_o)}\right)^{k}\Big].
\end{align*}
Again, using~\eqref{eq:scaling} and Assumption~\ref{as:2}, it suffices to show that, under the event $W_o< w_s^\eta$,
\[ \f{\bar\mu_-(W_o,V_o(s))}{\bar\mu_+(W_o,V_o(s))}\le \f{\E\Big[W^a\int^\ff_{ V_o(s)/W^a}\varphi(u)\d u\Big]}{\E\Big[W^a\one\{W\ge w_s^\eta\}\int^\ff_{ V_o(s)/W^a}\varphi(u)\d u\Big]}\in o(1). \]
This is true since, first for the numerator, for all $u$, we have $\E [W^a \one\{W^a\ge V_o(s)/u\} ] \le \mu_a$ and $\lim_{s\tff} \E [ W^a \one \{W^a \ge V_o(s)/u \} ] = 0$ and thus, using dominated convergence,
\begin{align*}
	\lim_{s\tff}\E\Big[W^a\int^\ff_{V_o(s)/W^a}\varphi(u)\d u\Big]&=
	\int_0^\ff \varphi(u)\E\Big[W^a\one\{W^a\ge V_o(s)/u\}\Big]\d u=0.
\end{align*}
On the other hand, for the denominator, for all $u$, we have $\E\Big[W^a\one\{W\ge w_s^\eta\}\one\{W^a\le V_o(s)/u\}\Big]\le \mu_a$ and $\lim_{s\tff}\E\Big[W^a\one\{W\ge w_s^\eta\}\one\{W^a\le V_o(s)/u\}\Big]=\mu_a$ and thus, again by dominated convergence, 
\begin{align*}
	\lim_{s\tff}\E\Big[W^a\int^\ff_{ V_o(s)/W^a}\varphi(u)\d u\Big]&=
\int_0^\ff  \varphi(u)\E\Big[W^a\one\{W^a\ge V_o(s)/u\}\Big]\d u=\mu_a.
\end{align*}
This finishes the proof.
\enp

%
%SEC QA
%
\subsection{Proof of Proposition~\ref{pr:qa}}
\label{sec:qa}
To prove Proposition~\ref{pr:qa}, we employ~\cite[Theorem 4.1]{BSY202}.
To express the considered functional in the framework of~\cite[Theorem 4.1]{BSY202}, we first introduce additional notation.
To each point $\bfx$ we associate a deterministic compact set $S^V(\bfx)$ from which the score function of interest can be computed with high probability.
In addition to $S^V(\bfx)$,~\cite[Theorem 4.1]{BSY202} also allows for the use of a more refined localization set $\cS(\bfx,\om)$, which may be random in general.
In the present setting, we do not need this additional flexibility since, at the beginning of Section~\ref{sec:proofs}, we have already implemented a truncation step.
Therefore, we set $\cS(\bfx,\om):=S^V(\bfx):=B_{(v_sW_xV/\nu_d)^{1/d}}(x)$, to be the ball of volume $v_sW_xV$ around $x$.

Then,~\cite[Theorem 4.1]{BSY202} bounds the KR-distance between the process of interest and a Poisson point process by a sum of four quantities.
The first of them is the total variation between the corresponding intensity measures.
The remaining three quantities, denoted by $E_1, E_2, E_3$ concern higher-order deviations.
Note that, since we choose $\cS(\bfx,\om)=S^V(\bfx)$, the $E_1$ term is identically 0.
Hence, we formally state three remaining separate auxiliary results, Lemmas~\ref{lem:par}--\ref{lem:e3} below.
The proofs follow afterwards.

We begin with the intensity measures.
By the homogeneity of the approximations, the intensity measure $L_s(\d x)$ of $\bar \xi_s^{V_o(s),w_s^\eta}$ has the constant Lebesgue density
$\int_\Om\E[\bar g_k^{ V_o(s),w_s^\eta}(\bfx,\wt{\PP}_s)]\bK_s(\d\bfx)$, where $\bar g_k^{ V_o(s),w_s^\eta} \equiv \bar g_k^{v_s W_x V_o(s),w_s^\eta}$ for brevity.
\bel[Convergence of intensity measures]	\label{lem:par}
We have that
\beqn\label{eq:E_4}
\begin{split}
\lim_{s\tff}d_{\rm TV}(L_s,{\rm{Leb}})=0.
\end{split}
\eeqn
\enl

As described above, the following statement is immediate.
\bel[Convergence of $E_1$] \label{lem:e1}
We have that
\beqn
\lim_{s\tff}\int_{\Om} \E\left[ \one\big\{\cS(\bfx,\wt{\PP}_s)\not\su S^{V_o(s)}(\bfx)\big\} \bar g_k^{V_o(s),w_s^\eta}(\bfx,\wt{\PP}_s)\right]\bK_s(\d\bfx)=0.
\label{eq:E_1}
\eeqn
\enl
%
%E2
%
Here are the remaining requirements.
\bel[Convergence of $E_2$] \label{lem:e2}
We have that
\beqn\label{eq:E_2}
\begin{split}
\lim_{s\tff} \int_{\Om^2} \one \{S^{V_o(s)}(\bfx) \cap S^{V_o(s)}(\bfz)\ne \es\} &\E [\bar g_k^{V_o(s), w_s^\eta}(\bfx, \wt{\PP}_s)] \E[\bar g_k^{V_o(s),w_s^\eta}(\bfz,\wt{\PP}_s)] \bK_s(\d\bfz)\bK_s(\d\bfx)=0.
\end{split}
\eeqn
\enl
%
%E3
%
\bel[Convergence of $E_3$]
	\label{lem:e3}
We have that
\beqn\label{eq:E_3}
\begin{split}
\lim_{s\tff} \int_{\Om^2} &\one\{S^{V_o(s)}(\bfx) \cap S^{V_o(s)}(\bfz)\ne \es\} \times \\
&\E[\bar g_k^{V_o(s),w_s^\eta}(\bfx,\wt{\PP}_s \cup \{\bfz\})\bar g_k^{V_o(s),w_s^\eta}(\bfz,\wt{\PP}_s\cup\{\bfx\})]\bK_s(\d\bfz)\bK_s(\d\bfx) = 0.
\end{split}
\eeqn
\enl

%
%PRF PAR
%
\bep[Proof of Lemma~\ref{lem:par}]
First note that 
\begin{align*}
d_{\rm TV}(L_s,{\rm{Leb}}) &\le \Big| \int_\Om\E[\bar g_k^{V_o(s),w_s^\eta}(\bfx,\wt{\PP}_s)]\bK_s(\d\bfx) - 1 \Big| \\
&\le \int_\Om \E[|\bar g_k^{V_o(s),w_s^\eta}(\bfx,\wt{\PP}_s) - g_k(\bfx,\wt{\PP}_s)|] \bK_s(\d\bfx) \\
&\le s \, \E [|\bar g_k^{V_o(s),w_s^\eta}(\bfo,\wt{\PP}_s) - g_k(\bfo,\wt{\PP}_s)| \mid W_o],
\end{align*}
where we used Lemma~\ref{theorem:Ex_poisson_conv} and the Mecke formula.
Step-by-step reintroducing the mark- and reach approximations, we see that 
\begin{align*}
s \, \E^o[|\bar g_k^{V_o(s),w_s^\eta}(\bfo,\wt{\PP}_s)-g_k(\bfo,\wt{\PP}_s)|] &\le s\E^o[\psi^c(s)+\psi(s)+\psi'(s)],
\end{align*}
where the right-hand-side tends to zero as $s$ tends to infinity using the exact same arguments as in the proof of Proposition~\ref{pr:neg}.
\enp

%
%PRF E2
%
\bep[Proof of Lemma~\ref{lem:e2}]
By symmetry, we can insert $2 \one\{W_z \ge W_x\}$.
Then, under this event, $\one\{S^{V_o(s)}(\bfx)\cap S^{V_o(s)}(\bfz)\ne \es\}\le \one\{x\in B_{2(v_sV_o(s)W_z/\nu_d)^{1/d}}(z)\}$
and hence, also using that $W_z\le w_s^\eta$ and translation invariance, the integral on the left-hand side of~\eqref{eq:E_2} is bounded from above by
\begin{equation*}
\begin{split}
2^dv_sw_s^\eta V_o(s)&\E[s\bar g_k^{V_o(s),w_s^\eta}(\bfo,\wt{\PP}_s)]^2,
\end{split}
\end{equation*}
where $\lim_{s\uparrow\infty}\E[s\bar g_k^{V_o(s),w_s^\eta}(\bfo,\wt{\PP}_s)]=1$ by Lemma~\ref{lem:par}.
Hence, using $1\le2sF(w_s^\eta)$ and Part 3 of Lemma~\ref{lem:estimates}  we have for some $c>0$, 
\begin{equation*}
\begin{split}
	v_sw_s^{\eta-K(1 - \eta)}\le c\log(s) w_s^{-(K + 1)( 1 -\eta)} F(w_s^\eta),
\end{split}
\end{equation*}
which by Assumption~\ref{as:2} tends to zero as $s\tff$.
\enp

%
%PRF E3
%
\bep[Proof of Lemma~\ref{lem:e3}]
Again, by symmetry, we may insert the indicator of the event $\one\{W_z \le W_x\}$, to obtain $\one\{S^{V_o(s)}(\bfx)\cap S^{V_o(s)}(\bfz)\ne \es\}\le \one\{x\in B_{2(v_sV_o(s)W_x/\nu_d)^{1/d}}(z)\}$.
The important observation is that $\bar g_k^{V_o(s),w_s^\eta}(\bfx,\wt{\PP}_s\cup\{\bfz\})$ only takes into account nodes with weight exceeding $W_x$ and therefore the point $\bfz$ can be neglected.
As $W_z \le w_s^\eta$, the integral in the left-hand side of~\eqref{eq:E_3} is bounded above by
\begin{align*}
2F(w_s^\eta)\int_{\Om^2} &\one\{S^{V_o(s)}(\bfx)\cap S^{V_o(s)}(\bfz)\ne \es\} \E[\bar g_k^{V_o(s),w_s^\eta}(\bfx,\wt{\PP}_s)]\bK_s(\d \bfz)\bK_s(\d \bfx)\\
&\le 2F(w_s^\eta)\int_{\Om^2} \one\{x\in B_{2(v_sV_o(s)w_s^\eta)^{1/d}}(z)\} \E[\bar g_k^{V_o(s),w_s^\eta}(\bfx,\wt{\PP}_s)]\bK_s(\d \bfz)\bK_s(\d \bfx)\\
&\le 4sv_sw_s^\eta V_o(s) F(w_s^\eta)\E[s\bar g_k^{V_o(s),w_s^\eta}(\bfo,\wt{\PP}_s)],
\end{align*}
where $\lim_{s\uparrow\infty}\E[s\bar g_k^{V_o(s),w_s^\eta}(\bfo,\wt{\PP}_s)]=1$ by Lemma~\ref{lem:par}.
For this, invoking again Part 3 of Lemma~\ref{lem:estimates}, we have for some $c > 0$ 
\[ \begin{split} s v_s w_s^{\eta - K (1 - \eta)} F(w_s^\eta) \le c \log(s) w_s^{-(K + 1) (1 - \eta)} F(w_s^\eta) \end{split} \]
which by Assumption~\ref{as:2} tends to 0 as $s\tff$.
\enp

\paragraph{\bf Acknowledgement.} BJ and SKJ received support by the Leibniz Association within the Leibniz Junior Research Group on \textit{Probabilistic Methods for Dynamic Communication Networks} as part of the Leibniz Competition (grant no.\ J105/2020).
This work was supported by the Danish Data Science Academy, which is funded by the Novo Nordisk Foundation (NNF21SA0069429) and Villum Fonden (40516).
\bibliographystyle{abbrv}
\bibliography{literature}

\end{document}